\DeclareMathOperator{\per}{Per}
 \DeclareMathOperator{\pper}{PrePer}
 \DeclareMathOperator{\rad}{rad}
 \DeclareMathOperator{\res}{Res}
\newtheorem{theorem}{Theorem}
\newtheorem{lemma}{Lemma}
\newtheorem{corollary}{Corollary}
\newtheorem{conjecture}{Conjecture}
\newtheorem{example}{Example}[section]
\newcommand{\edv}{\mathrel\Vert}
\title{Rational Periodic Points of $x^d+c$ and Fermat-Catalan Equations}
\date{} 					
\author{Chatchawan Panraksa\thanks{This work is supported by Mahidol University International College via Research Grant No. 06/2018.} \\
	Applied Mathematics Program\\ Mahidol University International College\\
	999 Phutthamonthon 4 Road, Salaya\\
	Nakhonpathom, Thailand 73170\\
	\texttt{chatchawan.pan@mahidol.edu}
}
\begin{document}
\maketitle
{\centering \it In memory of 
Sarat Sinlapavongsa (1979 - 2018)\par}
\begin{abstract}
	We study rational periodic points of polynomial $f_{d,c}(x)=x^d+c$ over the field of rational numbers, where $d$ is an integer greater than 2. For period 2, we classify all possible periodic points for degrees $d=4,6$. We also demonstrate the nonexistence of rational periodic points of exact period 2 for $d=2k$ such that $3\mid 2k-1$ and $k$ has a prime factor greater than 3. Moreover, assuming the $abc$-conjecture, we prove that $f_{d,c}$ has no rational periodic point of exact period greater than 1 for sufficiently large integer $d$ and $c\neq -1$.
\end{abstract}


\section{Introduction}

An arithmetic dynamical system is a pair $(\mathbb{P}^N(K),\phi)$ where $K$ is a number field and $\phi:\mathbb{P}^N(K)\longrightarrow \mathbb{P}^N(K)$ is a morphism on a projective space $\mathbb{P}^N(K)$ of dimension $N$. For $x\in \mathbb{P}^N(K)$, we denote $\phi^0(x)=x$ and $\phi^m(x)=\phi(\phi^{m-1}(x))$ where $m\in \mathbb{N}.$ We say a point $P\in \mathbb{P}^N(K)$ is a {\it periodic point} of $\phi$ if $\phi^{n}(P)=P$ for some $n\geq 1.$ If there is no positive integer $m<n$ such that $\phi^m(P)=P$, then $P$ is a periodic point of {\it exact period} $n.$ A point $Q\in\mathbb{P}^N(K)$ is called a {\it preperiodic point} of $\phi$ if $\phi^{n_1}(Q)=\phi^{n_2}(Q)$ for some positive integers $n_1\neq n_2.$ We denote $\per(\phi,\mathbb{P}^N(K))$ and $\pper(\phi,\mathbb{P}^N(K))$ the sets of periodic and preperiodic points of $\phi$ on $\mathbb{P}^N(K)$, respectively.
Morton and Silverman \cite{MR1264933} proposed a conjecture for morphisms over number fields : 
\begin{conjecture}[Morton-Silverman Uniform Boundedness Conjecture]
There exists a bound $B=B(D,N,d)$ such that if $K$ is a number field of degree $D,$ and $\phi:\mathbb{P}^N(K)\longrightarrow \mathbb{P}^N(K)$ is a morphism of degree $d\geq 2$ defined over $K,$ then the number of preperiodic points of $\phi$ is bounded by $B.$
\end{conjecture}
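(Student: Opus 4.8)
The plan is to bound the full preperiodic set by decomposing it, for each map $\phi$, into its finitely many periodic cycles together with the rooted trees of strictly preperiodic points feeding into them, and then to bound every ingredient uniformly in $D,N,d$. First I would invoke the canonical height machinery of Call--Silverman: the canonical height $\hat h_\phi$ satisfies $\hat h_\phi(\phi(P))=d\,\hat h_\phi(P)$ and vanishes exactly on the preperiodic points, which gives finiteness and reduces the problem to three uniform sub-bounds --- (i) the exact period $n$ of any cycle, (ii) the number of cycles of each period, and (iii) the depth and branching of the trees of strictly preperiodic points. Branching is automatic, since each point has at most $d^N$ preimages under $\phi$; the substance lies in bounding periods, cycle counts, and tail depths.

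For the period and tail bounds the natural engine is reduction modulo primes of good reduction. Following the good-reduction estimates of Morton--Silverman, the exact period $n$ of a rational periodic point is controlled by its period modulo two primes $\mathfrak p,\mathfrak q$ of good reduction: one shows $n$ factors as $m\,r\,p^e$, where $m$ is the period mod $\mathfrak p$, $r$ divides the order of the reduction of an associated multiplier, and $p=\mathrm{char}(\mathfrak p)$, with an analogous factorization at $\mathfrak q$. Comparing the two factorizations, with $\mathfrak p,\mathfrak q$ chosen of small residue characteristic, should confine $n$ to a finite set, and the same injectivity of reduction on points of small height bounds the tail depths. The delicate point is that the set of bad primes depends on $\phi$, so I would need a lower bound on the supply of small good primes that is uniform over the moduli space $\mathcal M_d^N$ of degree-$d$ endomorphisms --- obtainable, in principle, from the resultant and discriminant of $\phi$ together with a counting argument.

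With the periods bounded, I would pass to the dynatomic parameter spaces to control the cycle counts. For fixed $n$, the locus $\{(\phi,P):P\text{ has exact period }n\}$ is a quasi-projective variety $Y_1(n)$ over $\mathbb Q$ --- the dynatomic modular curve when $N=1$, and a higher-dimensional analog in general --- and the rational periodic points of exact period $n$ for a given $\phi$ are the rational points in the fiber over $[\phi]$. I would study the geometry of $Y_1(n)$, showing that its genus (for $N=1$), and more generally the positivity of its canonical bundle and its Kodaira dimension, grow with $n$, so that these spaces are of general type for large $n$. Then I would feed in uniform rational-point inputs: Faltings together with the recent uniform Mordell bounds of Dimitrov--Gao--Habegger and Kühne in the curve case, and the Bombieri--Lang philosophy in higher dimension, to bound the fiberwise count by a constant depending only on $D,N,d$. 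Summing the per-period counts over the finitely many admissible $n$, and folding in the bounded tail depths and the $d^N$ branching, yields the desired bound $B(D,N,d)$.

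The hard part, where the difficulty is genuinely concentrated, is uniformity at two junctures. First, the good-reduction argument controls each period only up to quantities tied to the bad primes of the specific $\phi$, and upgrading this to a bound in $D,N,d$ alone appears to demand new uniform estimates over the moduli of maps. Second --- and this is the principal obstacle --- even in the smallest case $N=1$, $d=2$ the required uniform bound on rational points of the dynatomic curves as $n\to\infty$ is not available unconditionally, and in higher dimension the argument leans on Bombieri--Lang. A realistic reading of this plan is therefore that it reduces the full conjecture to two clean uniform statements: an effective good-prime density over moduli, and uniform rational-point bounds on the dynatomic parameter spaces, with the latter the crux.
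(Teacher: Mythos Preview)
The statement you are attempting to prove is a \emph{conjecture}, not a theorem: the paper states the Morton--Silverman Uniform Boundedness Conjecture as background motivation and offers no proof of it, because none is known. Everything the paper actually proves concerns the very special family $f_{d,c}(x)=x^d+c$ over $\mathbb{Q}$, and even there the nontrivial results are conditional on the $abc$-conjecture. So there is no ``paper's own proof'' to compare against.

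As for your outline on its merits: you yourself identify the crux correctly at the end. Your argument is not a proof but a reduction to two statements that are themselves open and of comparable depth to the original conjecture. The period and tail bounds via good reduction are not uniform in $\phi$ without an input you do not have (a uniform supply of small good primes over the moduli of maps), and your cycle-count step for $N=1$ requires uniform bounds on rational points of \emph{all} dynatomic curves $Y_1(n)$, which is not delivered by Dimitrov--Gao--Habegger/K\"uhne (those give uniformity in the curve once genus $\geq 2$ is fixed, but you need uniformity as $n\to\infty$ with the genus growing, together with control of the finitely many small-$n$ cases where $Y_1(n)$ has genus $\leq 1$ and may have infinitely many rational points). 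For $N\geq 2$ you invoke Bombieri--Lang outright. In short, the plan is a reasonable heuristic roadmap for why one might believe the conjecture, but it does not close the gap, and no unconditional proof along these or any other lines currently exists.
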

This conjecture is remarkably strong. For $(D,n,d)=(1,1,4),$ the conjecture implies that the size of the torsion subgroup of an elliptic curve is uniformly bounded (see \cite{MR488287}). This can be done via the associated Latt\`es map. Similarly, for $(D,n,d)=(D,1,4),$ the conjecture implies the Merel's theorem \cite{MR1369424}. Therefore, the set of associated Latt\`es maps of elliptic curves is a family of rational functions that is proved to satisfy the conjecture. Recently, under the assumption of the $abcd$-conjecture, Looper proved the Uniform Boundedness Conjecture by Morton and Silverman for the case of polynomials (see \cite{Looper1}).The conjecture for quadratic polynomials over $\mathbb{Q}$ was posed by Russo and Walde \cite{MR1270956} as follows: 
\begin{conjecture}[Uniform Boundedness Conjecture for $x^2+c$]\label{BoundedQuadratic}
If $n\geq 4,$ then there is no quadratic polynomial $f_c(x)=x^2+c\in\mathbb{Q}[x]$ with a rational point of exact period $n.$
\end{conjecture}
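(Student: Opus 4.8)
This is a long-standing open problem, so the most I can lay out is the line of attack and the point at which it currently stalls. My plan would be to translate the assertion ``$x_0$ has exact period $n$ under $f_c(t)=t^2+c$'' into the existence of a suitable rational point on the $n$-th dynatomic curve $Y_1^{\mathrm{dyn}}(n)$ --- the affine curve in the $(x,c)$-plane cut out by the $n$-th dynatomic polynomial $\Phi_n(x,c)$, whose roots in $x$ are precisely the points of exact period $n$ --- and then to show this curve carries no rational points beyond those forced by cusps and collisions of shorter cycles.

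For small $n$ this can be carried out by hand, and it is instructive to see why the threshold is $n\ge 4$. For $n=1,2$ there is genuinely no obstruction: the fixed-point equation gives the rational parametrization $c=x-x^2$, while subtracting the two period-$2$ equations forces $x+y=-1$ and hence $c=-x^2-x-1$, a one-parameter family of bona fide $2$-cycles. For $n=3$ one computes (Morton) that $Y_1^{\mathrm{dyn}}(3)$ has genus $2$ and, by an explicit study of it and its Jacobian, that its only rational points are cusps, so there is no rational $3$-cycle. For $n=4$ the dynatomic curve again has genus $2$ and Flynn--Poonen--Schaefer determine all of its rational points, via the Mordell--Weil group of the Jacobian together with a Chabauty/covering-collection argument, again finding only cusps. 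Past that the curves have rapidly growing invariants (the degree of $\Phi_n$ in $x$ alone is $\sim 2^n$), and $n=5,6$ have been reached only with much heavier machinery or extra hypotheses --- for $n=6$, Stoll assuming the Birch--Swinnerton-Dyer conjecture for a quotient Jacobian.

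To reach the \emph{uniform} statement --- all $n\ge 4$ at once --- I would abandon the curve-by-curve approach and instead bound the length of any rational cycle a priori, which is exactly where an $abc$-type input enters. Given a rational $n$-cycle $x_0\mapsto x_1\mapsto\cdots\mapsto x_{n-1}\mapsto x_0$ for $t^2+c$, one writes each $x_i$ in lowest terms, shows the denominators are severely constrained (essentially controlled by the denominator of $c$), and extracts from the telescoping relations $x_{i+1}-x_i=x_i^2-x_{i-1}^2=(x_i-x_{i-1})(x_i+x_{i-1})$ an $S$-unit or Fermat--Catalan-type equation whose solutions the $abc$-conjecture bounds; this caps $n$. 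Looper's theorem quoted above already realizes this mechanism: conditional on an $abc$-type conjecture it gives a uniform bound on the number of preperiodic points of $t^2+c$ over $\mathbb{Q}$, hence a bound $N_0$ on cycle lengths, and the degree-$d$ results of the present paper run on the same engine.

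The hard part --- and the reason the conjecture is still open --- is the gap between what these two approaches deliver. Unconditionally, each remaining period $n$ requires locating \emph{all} rational points on a curve whose invariants blow up with $n$, which is out of reach past very small $n$; conditionally on $abc$ one obtains only an ineffective, astronomically large $N_0$, nowhere near the sharp value $4$, so one would still have to dispatch every $n$ with $4\le n<N_0$ by curve methods. Bridging that gap, rather than any single curve computation, is the real obstacle.
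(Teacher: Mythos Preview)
This statement is a \emph{conjecture}, not a theorem, and the paper offers no proof of it; it is quoted purely as background, together with the remark that the cases $n=4$ and $n=5$ have been settled unconditionally and $n=6$ conditionally on BSD. You correctly recognize that the full statement remains open and sketch the two natural lines of attack (dynatomic curves for individual $n$; $abc$-type input for a uniform bound), so in that sense there is nothing in the paper to compare your write-up against.

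That said, several details in your historical summary are wrong and would have to be fixed before this could serve even as exposition. Rational $3$-cycles for $f_c(x)=x^2+c$ \emph{do} exist --- for instance $c=-29/16$ supports the cycle $-\tfrac14\mapsto -\tfrac74\mapsto \tfrac54\mapsto -\tfrac14$ --- and the dynatomic curve for $n=3$ has genus $0$, not genus $2$; this is precisely why the conjecture is stated only for $n\ge 4$. You also have the attributions for $n=4$ and $n=5$ reversed: Morton proved the case $n=4$ (the relevant curve is hyperelliptic of genus $2$), while Flynn--Poonen--Schaefer treated $n=5$ (the full dynatomic curve has genus $14$; they pass to a genus-$2$ quotient). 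The paper records these attributions correctly in the paragraph immediately following the conjecture.
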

For exact period $n=1,2$ and 3, Walde and Russo \cite{MR1270956} described the periodic points of $f_c(x)=x^2+c$. Conjecture~\ref{BoundedQuadratic} has been proved for  $n=4$ and $n=5$ by Morton \cite {MR1665198}, and Flynn, Poonen and Shaefer  \cite{MR1480542}, respectively. For $n=6$, Stoll \cite{MR2465796} verified the conjecture under the assumption of the Birch and Swinnerton-Dyer conjecture for a certain genus 4 curve. Poonen \cite{PoonenClass} showed that Conjecture~\ref{BoundedQuadratic} implies $\#\pper(f_c,\mathbb{P}^1(\mathbb{Q}))\leq 9.$\\
The quadratic polynomial $f_c(x)=x^2+c$ belongs to a family of polynomials of the form $f_{d,c}(x)=x^d+c.$ Based on calculations, Hutz \cite{MR3266961} proposed an \textit{absolute} boundedness conjecture for $f_{d,c}(x)=x^d+c.$ We use ``absolute" to emphasize that the bound does not depend on the degree $d$.
\begin{conjecture}[Generalized Poonen]
For $n>3$, there is no integer $d\geq 2$ and $c\in \mathbb{Q}$ such that $f_{d,c}(x)=x^d+c$ has a rational periodic point of exact period $n$. For maps of the form $f_{d,c}$, we have  $$\#\pper(f_{d,c},\mathbb{P}^1(\mathbb{Q}))\leq 9.$$
\end{conjecture}
For odd degree $d$, Narkiewicz \cite{MR3447581} studied the periodic points of $f_{d,c}$ over number fields. In particular, his results yield the nonexistence of rational periodic points of exact period greater than one.
\begin{theorem}[Odd degree]\label{odd}
For $n>1$ and odd integer $d>2$, there is no $c\in\mathbb{Q}$ such that $f_{d,c}(x)=x^d+c$ has a rational periodic point of exact period $n$. Moreover, $$\#\pper(f_{d,c},\mathbb{P}^1(\mathbb{Q}))\leq 4.$$
\end{theorem}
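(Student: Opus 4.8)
The plan is to rule out rational cycles of exact period $n>1$ by a global ``denominator $+$ sign'' argument, and then to bound the whole preperiodic set by collapsing it onto fixed points.

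First I would clear denominators. Suppose $P_0,P_1,\dots,P_{n-1}$ (indices read mod $n$) is a rational cycle of $f_{d,c}$ of exact period $n\ge 2$, so the $P_i$ are pairwise distinct and $P_{i+1}=P_i^{\,d}+c$. Let $b\ge 1$ be the least common multiple of the denominators of the $P_i$ and set $a_i:=bP_i\in\mathbb Z$. Since $c=P_1-P_0^{\,d}$, the denominator of $c$ divides $b^{d}$, so $C:=b^{d}c\in\mathbb Z$; multiplying the recurrence by $b^{d}$ gives
\[
b^{\,d-1}a_{i+1}=a_i^{\,d}+C\qquad(i=0,\dots,n-1).
\]
Subtracting the $i$-th equation from the $(i+1)$-st and writing $\delta_i:=a_{i+1}-a_i$, one gets $b^{\,d-1}\delta_{i+1}=a_{i+1}^{\,d}-a_i^{\,d}$.

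The crux is to exploit that $d$ is odd: the map $t\mapsto t^{d}$ is strictly increasing on $\mathbb R$, so $a_{i+1}^{\,d}-a_i^{\,d}$ has the same sign as $a_{i+1}-a_i=\delta_i$, and it is nonzero since the $a_i$ are distinct. As $b^{\,d-1}>0$, this forces $\delta_{i+1}$ to have the same sign as $\delta_i$, and hence inductively all of $\delta_0,\dots,\delta_{n-1}$ share one fixed nonzero sign. But $\sum_{i=0}^{n-1}\delta_i=a_n-a_0=0$ by telescoping around the cycle, which is impossible for a sum of $n\ge 2$ nonzero reals of equal sign. This contradiction shows $f_{d,c}$ has no rational point of exact period $n>1$, so its rational periodic points are exactly its rational fixed points. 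I expect this telescoping sign obstruction to be the one genuinely clever step; everything around it is bookkeeping.

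For the bound on $\#\pper(f_{d,c},\mathbb P^1(\mathbb Q))$, I would argue as follows. The point $\infty$ is fixed and the forward orbit of a finite rational point stays finite, so $\infty$ contributes exactly one preperiodic point. The finite rational fixed points are the roots of $x^{d}-x+c$; since $d\ge 3$ is odd, the real function $x\mapsto x^{d}-x$ has exactly one local maximum and one local minimum, so $x^{d}-x=-c$ has at most three real, hence at most three rational, solutions. Finally, if a rational $Q$ satisfies $f_{d,c}(Q)=P$ with $P$ fixed, then $Q^{d}+c=P=P^{d}+c$, whence $Q^{d}=P^{d}$ and therefore $Q=P$ (again using injectivity of $t\mapsto t^{d}$ on $\mathbb Q$ for odd $d$). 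Thus no fixed point has a rational preimage other than itself, and combined with the absence of longer cycles this forces every finite rational preperiodic point to be a fixed point. Altogether $\#\pper(f_{d,c},\mathbb P^1(\mathbb Q))\le 3+1=4$.
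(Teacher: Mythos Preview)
Your argument is correct. The paper does not actually supply its own proof of this theorem; it states the result and attributes it to Narkiewicz~\cite{MR3447581}, so there is no in-paper proof to compare against. That said, a few remarks on your approach versus what one would find in the literature are worth making.

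For the first part, your telescoping sign argument is exactly the right idea, but the denominator-clearing is unnecessary scaffolding: since $d$ is odd, $f_{d,c}(x)=x^d+c$ is strictly increasing on $\mathbb{R}$, so the difference recurrence $P_{i+2}-P_{i+1}=P_{i+1}^{\,d}-P_i^{\,d}$ already shows that the real sequence $(P_i)$ is strictly monotone whenever two consecutive terms differ, hence cannot close up into a cycle of length $\ge 2$. This rules out \emph{real} cycles of exact period $>1$, not just rational ones, which is a slightly stronger (and cleaner) statement than what you wrote. Narkiewicz's argument in~\cite{MR3447581} works over arbitrary number fields and so cannot rely on the real ordering; your argument is more elementary over $\mathbb{Q}$ precisely because it exploits the archimedean place.

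For the second part, your count is correct and the bound is sharp (take $c=0$, with finite fixed points $-1,0,1$ and the point at infinity). The key step---that a rational preimage $Q$ of a fixed point $P$ satisfies $Q^d=P^d$ and hence $Q=P$ by injectivity of the odd power map---together with the induction collapsing every preperiodic tail onto a fixed point, is exactly right. The observation that $x^d-x+c$ has at most three real roots (two critical points since $d-1$ is even) is the clean way to bound the number of finite fixed points.
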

It remains open for even degree $d>2$. Hutz \cite{MR3266961} also proposed the following conjecture.
\begin{conjecture}[Even degree]
For $n>2$ and even integer $d>2$, there is no $c\in \mathbb{Q}$ such that $f_{d,c}(x)=x^d+c$ has a rational periodic point of exact period $n$. Moreover, $$\#\pper(f_{d,c},\mathbb{P}^1(\mathbb{Q}))\leq 4.$$
\end{conjecture}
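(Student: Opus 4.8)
This conjecture is open; I describe the strategy that applies and isolate the bottleneck. The plan is to treat the two assertions --- no exact period $n>2$, and $\#\pper\le4$ --- separately and, for the first, to reduce every periodic orbit to a Diophantine identity, dispose of integral orbits elementarily, and attack the remaining (non-integral) orbits by splitting on the size of $d$. For an orbit $\alpha_0\to\alpha_1\to\cdots\to\alpha_{n-1}\to\alpha_0$ of exact period $n\ge2$, subtracting consecutive relations $\alpha_{i+1}=\alpha_i^d+c$ gives $\alpha_{i+1}-\alpha_i=(\alpha_i-\alpha_{i-1})\sigma_i$ with $\sigma_i=\sum_{j=0}^{d-1}\alpha_i^{\,j}\alpha_{i-1}^{\,d-1-j}$, and multiplying around the cycle (all these differences are nonzero since $n\ge2$) yields $\prod_{i=0}^{n-1}\sigma_i=1$; for $n=2$ the single relation $\alpha_1-\alpha_0=(\alpha_0-\alpha_1)\sigma_0$ already forces $\sum_{j=0}^{d-1}\alpha^{\,j}\beta^{\,d-1-j}=-1$. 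A $v$-adic argument removes denominators: if an orbit point had negative valuation at a prime $v$, minimality of that valuation would force $v(c)$ to be a multiple of $d$ with every orbit point sharing the valuation, so after clearing a $d$-th power a period-$2$ orbit is $\alpha=a/e$, $\beta=b/e$ with $\gcd(a,e)=\gcd(b,e)=1$, $a\ne b$, and the identity becomes
$$a^{d-1}+a^{d-2}b+\cdots+b^{d-1}=-e^{d-1},\qquad\text{equivalently}\qquad a^d-b^d=e^{d-1}(b-a),$$
which is \emph{equivalent} to the existence of a rational $2$-cycle, with $c=(ae^{d-1}-b^d)/e^d$. When every orbit point is an integer --- automatic if $c\in\mathbb{Z}$, and the case $e=1$ for period $2$ --- the argument is elementary: $\prod_i\sigma_i=1$ forces each $\sigma_i=\pm1$, but $\sigma_i=\tfrac{\alpha_i^d-\alpha_{i-1}^d}{\alpha_i-\alpha_{i-1}}$ on distinct integers exceeds $1$ in absolute value as soon as one argument has absolute value $\ge2$ (for $d\ge4$), so the orbit lies in $\{-1,0,1\}$ and the only cycle there is $\{0,-1\}$, with $c=-1$, of exact period $2$. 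Thus the whole difficulty is the non-integral case $e\ge2$. (Contrast Theorem~\ref{odd}: for odd $d$, monotonicity of $f_{d,c}$ over $\mathbb{R}$ already forbids cycles of length $\ge2$, whereas $f_{d,c}$ is even for even $d$, so there is no real obstruction and the $2$-cycle $\{0,-1\}$ genuinely occurs.)

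For $e\ge2$ and small $d$ I would make the cyclotomic factorization of $a^d-b^d$ explicit. For $d=4$, $(a+b)(a^2+b^2)=-e^3$, and passing to $S=\alpha+\beta$, $P=\alpha\beta$ and imposing that the discriminant $S^2-4P$ be a rational square converts this into ``$-S^4-2S$ is a square'', a curve of genus $1$ whose rational points are computable by descent and force $\{\alpha,\beta\}=\{0,-1\}$. For $d=6$ one more cyclotomic factor appears and the analogous elimination gives a curve of small genus amenable to Chabauty-type methods, again leaving only $c=-1$. For infinitely many even $d$ --- precisely those with $3\mid d-1$ and $d/2$ having a prime factor $>3$ --- the cyclotomic pieces $\Phi_m(a,b)$ become pairwise coprime and coprime to $a-b$ in the way one needs, so each is a unit times a perfect power dividing $e^{d-1}$, and the resulting identity is a Fermat--Catalan equation $x^p+y^q=z^r$ with $\tfrac1p+\tfrac1q+\tfrac1r<1$ whose known solutions do not fit; hence no $2$-cycle exists.

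For $e\ge2$ with $d$ large --- and, through $\prod_i\sigma_i=1$, for periods $n\ge3$ as well --- I would invoke the $abc$-conjecture. From $\sum_j a^jb^{d-1-j}=-e^{d-1}$ one gets $|e|\ll\max(|a|,|b|)$, hence $\rad(abe(a-b))\ll\max(|a|,|b|)^4$; applying $abc$ to $a^d+(a-b)e^{d-1}=b^d$ pits this radical against a term of size $\gg\max(|a|,|b|)^{d-1}$, so once $d\ge6$ it bounds $\max(|a|,|b|)$ by an absolute constant, after which a finite check leaves only $c=-1$; the same estimate applied to each $\sigma_i$ disposes of longer orbits. This is precisely the ``assuming $abc$, sufficiently large $d$, $c\ne-1$'' statement of the abstract.

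The main obstacle is uniformity in $d$: the $abc$ step engages only above an absolute bound, the Fermat--Catalan step only covers the congruence-restricted $d$, and explicit curves are feasible only for the smallest $d$; the remaining even $d$, and especially periods $n\ge3$ with $e\ge2$ at those $d$, form a genuine open problem, since the relevant dynatomic curves have genus growing with $d$ and no uniform rational-point method is known. Separately, the bound $\#\pper(f_{d,c},\mathbb{P}^1(\mathbb{Q}))\le4$ is not visible from the cycle identities: it also requires controlling strictly preperiodic points --- rational preimages of the fixed points and of the $2$-cycle $\{0,-1\}$, where one should use that $f_{d,c}$ is even so $-\alpha$ is preperiodic whenever $\alpha$ is --- a separate ramification-and-heights analysis. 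For these reasons a complete proof appears to be beyond current methods, and this paper establishes exactly the pieces the strategy can reach: the classification for $d=4,6$, a congruence family, and the $abc$-conditional large-$d$ statement.
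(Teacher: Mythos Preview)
You correctly identify this as an open conjecture that the paper does not prove, and your outline of the partial strategy --- integral orbits disposed of elementarily, a congruence family of $d$ handled via Fermat--Catalan equations, and large $d$ treated conditionally via the $abc$-conjecture --- matches precisely the partial results the paper actually establishes.

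There is, however, one concrete error. In your $d=4$ paragraph you correctly arrive at the condition that $-S^4-2S$ be a rational square (with $S=\alpha+\beta$), but you then assert that the rational points on this curve ``force $\{\alpha,\beta\}=\{0,-1\}$.'' The paper's Theorem~\ref{period4} proves exactly the opposite: the curve $y^2=-t^4-2t$ is elliptic with $E(\mathbb{Q})\cong\mathbb{Z}$, so there are \emph{infinitely many} $c\in\mathbb{Q}$ for which $f_{4,c}$ has a nontrivial rational $2$-cycle (for instance $c=-19561/10000$ with cycle $\{-13/10,\,9/10\}$). Your parallel between $d=4$ and $d=6$ therefore breaks down --- for $d=6$ the Chabauty argument does leave only $c=-1$ (Theorem~\ref{period6}), but $d=4$ behaves entirely differently. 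This mistake does not touch the first clause of the conjecture, which concerns only $n>2$, but it does undermine your account of the second clause: a nontrivial $2$-cycle $\{\alpha,\beta\}$ with even $d$ automatically brings along the strictly preperiodic points $-\alpha,-\beta$, and together with $\infty$ this already gives five points on $\mathbb{P}^1$, so the bound $\#\pper\le4$ is more delicate here than your sketch allows. The paper's own $\#\pper\le4$ statement (Theorem~\ref{unifQ}) is conditional on $abc$ and asserted only for sufficiently large $d$.
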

Doyle and Poonen \cite{DoylePoonen} showed that the number of preperiodic points of $z^d+c$ in a number field with a fixed eventual period is uniformly bounded.
Under the assumption of the $abcd$-conjecture, Looper \cite{Looper} showed that the uniform boundedness of $f_{d,c}(x)=x^d+c$ holds for number fields and one-dimensional function fields of characteristic zero. Looper's innovative techniques rely on global equidistribution statement for preperiodic points and adelic properties of preperiodic points. Here we state Looper's theorem only for the number field case (for details of the function field case and the $abcd$-conjecture, see \cite{Looper}).
\begin{theorem}[Uniform Boundedness for $x^d+c$]\label{Looper} Let $d\geq 2$ and $K$ be a number field. Let $f_{d,c}(x)=x^d+c\in K[x]$. If $d\geq 5,$ assume the $abc$-conjecture. If $2\leq d\leq 4$, assume the $abcd$-conjecture. There is a $B=B(d,K)$ such that $f_{d,c}$ has at most $B$ preperiodic points.
\end{theorem}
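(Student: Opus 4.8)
The plan is to reduce the whole statement to a bound on the \emph{period} and then to attack that bound with canonical heights, adelic equidistribution, and the $abc$-conjecture. First I would introduce the canonical height $\hat h_{d,c}(P)=\lim_{n\to\infty}d^{-n}h(f_{d,c}^{\,n}(P))$, which satisfies $\hat h_{d,c}\circ f_{d,c}=d\,\hat h_{d,c}$ and vanishes exactly on preperiodic points; so it suffices to bound $\#\{P\in\mathbb{P}^1(K):\hat h_{d,c}(P)=0\}$ uniformly in $c$. By the Doyle--Poonen result quoted above, the number of preperiodic points of a fixed eventual period is already uniformly bounded, so — summing over periods — it is enough to show that the exact period $n$ of any $K$-rational periodic point of $f_{d,c}$ satisfies $n\le n_0(d,K)$ for a bound independent of $c$.

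For intuition I would first treat a cycle $P_0,\dots,P_{n-1}$ directly. Writing $\delta_i=P_i-P_{i+1}$, the identity $\delta_{i+1}=f_{d,c}(P_i)-f_{d,c}(P_{i+1})=\delta_i\sigma_i$ with $\sigma_i=\sum_{k=0}^{d-1}P_i^{\,k}P_{i+1}^{\,d-1-k}$, iterated once around the cycle (where $\delta_n=\delta_0\neq 0$), gives the telescoping relation $\prod_{i=0}^{n-1}\sigma_i=1$. For $n=2$ the power sum is symmetric, so $\sigma_0=\sigma_1$ and hence $\sum_{k=0}^{d-1}\alpha^k\beta^{d-1-k}=\pm1$ for the two cycle coordinates $\alpha,\beta$ — a Fermat--Catalan-type equation, exactly the situation studied in this paper, which $abc$ rules out for $d$ large. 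For $n\ge 3$ the relation $\prod\sigma_i=1$ is by itself an identity, so a genuinely global argument is needed to cap $n$.

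That global argument I would run place by place. For each place $v$ of $K$, let $\mu_{d,c,v}$ be the canonical $v$-adic equilibrium measure of $f_{d,c}$ (a measure on the Berkovich line at finite $v$), supported on the $v$-adic Julia set. A long cycle is a large set of points of canonical height $0$, so a quantitative, uniform-in-$c$ form of the equidistribution theorem — in the spirit of Baker--Rumely, Chambert-Loir, and Favre--Rivera-Letelier — forces such a set to be $\varepsilon(n)$-close to $\mu_{d,c,v}$ at \emph{every} $v$ simultaneously, with $\varepsilon(n)\to 0$; in particular its points cannot be $v$-adically tightly clustered anywhere. On the arithmetic side, the quantities $\sigma_i$ and the discriminant of the dynatomic polynomial of period $n$ are polynomial in $c$, so the total separation $\prod_{i<j}\prod_v|\alpha_i-\alpha_j|_v$ of the periodic points is governed, via the product formula, by the radicals $\rad(\cdot)$ of the relevant resultants $\res(\cdot,\cdot)$; the $abc$-conjecture then prevents $c$ from being so arithmetically degenerate that these radicals collapse, yielding a lower bound on $v$-adic separation at all but a bounded set of primes. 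Confronting this lower bound with the equidistribution constraint produces a contradiction once $n$ is large, giving $n\le n_0(d,K)$, and Doyle--Poonen then finishes.

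I expect the main obstacle to be making the equidistribution input both \emph{effective} and \emph{uniform across the family} $\{f_{d,c}\}_{c}$ while reconciling it with the archimedean place, where $\mu_{d,c,\infty}$ lives on the possibly fractal Julia set of $f_{d,c}$ and one needs a quantitative, $c$-independent lower bound on how spread out height-zero points must be on it. This is also where the hypotheses enter: for $d\ge 5$ the radical estimates needed are precisely of $abc$ strength, whereas for $2\le d\le 4$ the controlling Diophantine relation carries more terms and only the stronger $abcd$-conjecture suffices. A last, bookkeeping-level obstacle is to carry every implied constant through the argument as a function of $d$ and $K$ alone — never of $c$ — so that the output is a genuine $B=B(d,K)$.
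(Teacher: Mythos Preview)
The paper does not prove this theorem at all: Theorem~\ref{Looper} is quoted as Looper's result \cite{Looper}, stated in the introduction purely for context and then combined with the paper's own Theorem~\ref{unifQ} to deduce an absolute bound over $\mathbb{Q}$. So there is no ``paper's own proof'' to compare your attempt to.

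That said, your outline is broadly consonant with the one-line description the paper gives of Looper's method (``global equidistribution statement for preperiodic points and adelic properties of preperiodic points''), and the ingredients you list --- canonical height, the telescoping identity $\prod_i\sigma_i=1$, quantitative adelic equidistribution of small points, and an $abc$-type lower bound on radicals of resultants --- are indeed the right ones. But as written it is a strategy, not a proof: the step where you ``confront'' the equidistribution constraint with the $abc$ lower bound is doing all the work, and you yourself flag that making the equidistribution input effective and uniform in $c$ (especially at the archimedean place, where the Julia set moves with $c$) is the genuine obstacle. Looper's paper is precisely about overcoming that obstacle; your sketch does not indicate how. Also, your use of Doyle--Poonen to reduce to bounding the period is slightly off in emphasis: their result bounds, for each fixed eventual period, the number of preperiodic points with that eventual period, so once you cap the period you are done --- but the hard content is entirely in the period bound, and invoking Doyle--Poonen does not shortcut any of it.
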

Note that if $K=\mathbb{Q},$ then the number of preperiodic points of $f_{d,c}(x)=x^d+c$ is bounded by a constant depends only on $d$.\\
This paper is structured as follows:\\
{\bf Section~\ref{period2_46}.} We analyze periodic points of exact period 2 of $f_{d,c}(x)=x^d+c$ for $d=4,6$ in detail.\\
{\bf Section~\ref{FCeq}.} We describe the connection between periodic points of exact period 2 of $f_{d,c}(x)=x^d+c$ and the Fermat-Catalan equations. We also show as an example that there are infinitely many $d$'s  such that $f_{d,c}$ has only trivial periodic points of exact period 2.\\
{\bf Section~\ref{abc}.} Under the assumption of the $abc$-conjecture, we show the existence of an absolute bound for the periodic points of $f_{d,c}(x)=x^d+c$ for sufficiently large $d$. In this section we prove the following theorem. The terms in the proof are similar to \cite{Looper} when $K=\mathbb{Q}$. Looper's strategy emphasizes on the detailed analysis of parameter $c$, that is, fixing $d$ and varying $c$.  The proof of the following theorem focuses on fixing $c$ and varying $d$.
\begin{theorem}\label{unifQ} Let $f_{d,c}(x)=x^d+c\in \mathbb{Q}[x]$. If the $abc$-conjecture is valid, then for sufficiently large degree $d$,
\begin{enumerate}
    \item $f_{d,c}$ has no rational periodic points of exact period greater than 1 except when $c=-1$,
    \item $\# \pper(f_{d,c},\mathbb{P}^1(\mathbb{Q}))\leq 4$.  
\end{enumerate}
 Moreover, if we assume the explicit $abc$-conjecture, then the result holds for $d\geq 7.$
\end{theorem}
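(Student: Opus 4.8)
The plan is to turn the existence of a rational periodic or preperiodic point into an integral Diophantine identity and hit that identity with the $abc$-conjecture. Throughout, $d$ is even (the odd case is Theorem~\ref{odd}) and larger than an absolute function of the constant $K$ appearing in the $abc$-conjecture (in the form: $A+B=C$, $\gcd(A,B)=1$ $\Rightarrow$ $C\le K\,\rad(ABC)^{2}$). The first ingredient is the usual non-archimedean normalization: for a prime $v$ and a rational point whose forward orbit under $f_{d,c}$ is finite, hence $v$-adically bounded, one checks that $v(x_i)\ge0$ along the orbit when $v(c)\ge0$, whereas $v(x_i)=v(c)/d$ for all $i$ when $v(c)<0$ — because $v(x)>v(c)/d$ forces $v(f_{d,c}(x))=v(c)$ and then the valuations escape to $-\infty$, while $v(x)<v(c)/d$ does so immediately. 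Hence $d\mid v(c)$ at each place dividing the denominator of $c$; writing $c=a/e^{d}$ with $\gcd(a,e)=1$, every rational periodic or preperiodic point has the shape $w/e$ with $\gcd(w,e)=1$.

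For periodic points of exact period $n\ge2$, write the cycle as $x_i=p_i/e$ with the $p_i$ distinct integers. The orbit relations $p_{i+1}e^{d-1}=p_i^{d}+a$ give, on subtracting consecutive ones, $p_{i+1}^{d}-p_i^{d}=(p_{i+2}-p_{i+1})e^{d-1}$. Let $P=\max_i|p_i|$, attained at $j$; since $d$ is even the $j$-th relation is $P^{d}=|p_{j-1}|^{d}+(p_{j+1}-p_j)e^{d-1}$, all terms positive integers (for $e\ge2$ coprimality with $e$ rules out $p_{j-1}=0$; the case $e=1$ is handled directly and more easily), and $e^{d-1}=(P^{d}-|p_{j-1}|^{d})/(p_{j+1}-p_j)\le P^{d}$, so $e\le P^{2}$. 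Dividing this relation by its $\gcd$ (which divides $p_{j+1}-p_j\le 2P$) yields a coprime triple, so $abc$ gives $P^{d-1}\le 2K\bigl(|p_{j-1}|\,(p_{j+1}-p_j)\,e\,P\bigr)^{2}\le 2K\,(2P^{5})^{2}$, hence $P^{d-11}\le 8K$, which bounds $P$ absolutely once $d$ is large. A finite check of the small-$P$ cases leaves only the $2$-cycle $\{0,-1\}$, which occurs exactly when $c=-1$; this proves part~(1).

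For preperiodic points, assume $d$ large and $c\neq-1$, so by part~(1) every periodic point is a fixed point, and $f_{d,c}(x)-x=x^{d}-x+c$ has at most two real roots (its derivative $dx^{d-1}-1$ has a single real zero), hence at most two rational fixed points. If a rational point has ``depth'' at least $2$ in the preperiodic tree, then some iterate of it is a point $w/e$ mapping to $-\alpha$, where $\alpha=u/e$ is a necessarily nonzero rational fixed point; combining $f_{d,c}(w/e)=-u/e$ with $ue^{d-1}=u^{d}+a$ yields $w^{d}+2ue^{d-1}=u^{d}$, again an $abc$-triple (after dividing by its $\gcd$, which divides $2|u|$), which forces $|u|$ to be bounded by an absolute constant and then, for $d$ large, leaves no solution. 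So the preperiodic tree has depth at most $1$: every rational preperiodic point is $\infty$, a fixed point, or $-\alpha$ for a fixed point $\alpha$. A short case analysis — two rational fixed points force $c=0$, and the exceptional $c=-1$ is treated directly via the cycle $\{0,-1\}$ — then gives $\#\pper(f_{d,c},\mathbb{P}^{1}(\mathbb{Q}))\le4$ in every case, which is part~(2). Feeding in the explicit $abc$-conjecture makes $K$ explicit and hence all thresholds effective, and accounting for the radicals more carefully (rather than the crude bound $\rad\le 2P^{5}$ above) lowers the bound to $d\ge7$.

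The step I expect to be the main obstacle is the passage from the orbit relations to honest coprime $abc$-triples: one needs the denominator normalization to be exactly right and the relevant $\gcd$'s controlled tightly enough that $abc$ genuinely sees a full $d$-th power on one side against a radical of size $O_{d}(P^{O(1)})$ on the other, which is what makes the exponent $d$ beat the exponent $O(1)$. A related pitfall is that the preperiodic bound does not follow formally from part~(1); it requires the separate argument above showing the tree has depth at most $1$, again powered by $abc$, together with the finite casework that isolates $c=-1$ and $c=0$.
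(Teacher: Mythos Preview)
Your approach coincides with the paper's: translate the orbit relations into $X_2^d-X_1^d=(X_3-X_2)Z^{d-1}$, apply the $abc$-conjecture (the paper packages this as Lemma~\ref{abclemma}, splitting into the cases $|X_2|\le|Z|$ and $|X_2|>|Z|$ rather than invoking your cruder bound $e\le P^2$, which is why its exponent loss is roughly $4(1+\epsilon)$ rather than your $11$), and then handle preperiodic points by a second $abc$ application ruling out type-$1_2$ points (the paper's Lemma~\ref{no12}). Two places in your sketch are not yet closed.

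First, the claim ``two rational fixed points force $c=0$'' is not a short case analysis. Two fixed points $A/Z\neq B/Z$ give $A^d-B^d=(A-B)Z^{d-1}$, which is exactly the Diophantine shape you already treated for cycles, so this step is another $abc$ application --- the paper isolates it as Lemma~\ref{fixedpoint} (via Lemma~\ref{diff_fixed}). Without it your preperiodic count can reach $5$ (two fixed points, their negatives, and $\infty$), so the bound $4$ does not follow from your real-analysis observation that there are at most two real fixed points. Second, your assertion that sharper radical bookkeeping alone lowers the threshold to $d\ge7$ is not borne out: even with the best constants from Theorem~\ref{ChimShoreySinha} the direct $abc$ inequality only reaches $d\ge10$, and the remaining even case $d=8$ is finished by first using the explicit $abc$ bound to force $\max(|X_i|,|Z|)\le1300$ and then exhausting that finite range by computer (odd $d\ge7$ being covered by Theorem~\ref{odd}). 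Your sketch omits this finite search.
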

For $K=\mathbb{Q}$, Theorem~\ref{Looper} and Theorem~\ref{unifQ} immediately imply the following theorem.
\begin{theorem}
Assume the $abcd$-conjecture. There is an absolute constant $B_0$ such that $\# \pper(f_{d,c},\mathbb{P}^1(\mathbb{Q}))\leq B_0,$ for all  $f_{d,c}(x)=x^d+c\in \mathbb{Q}[x]$ with $d\geq 2$.
\end{theorem}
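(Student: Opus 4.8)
The plan is to produce $B_0$ as the maximum of a single uniform bound covering all sufficiently large degrees and a finite list of degree-by-degree bounds covering the small ones. First I would note that the $abcd$-conjecture implies the $abc$-conjecture, so every hypothesis invoked below is available. Applying part~(2) of Theorem~\ref{unifQ} under the $abc$-conjecture produces an integer $D_0$ with the property that $\#\pper(f_{d,c},\mathbb{P}^1(\mathbb{Q}))\le 4$ for every $d\ge D_0$ and every $c\in\mathbb{Q}$ --- including $c=-1$, where (for $d$ even) the cycle $\{0,-1\}$ occurs but the only preperiodic points are $0$, $-1$, $1$ and $\infty$, so the bound $4$ is not violated. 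The key feature is that this estimate depends neither on $c$ nor on the particular $d\ge D_0$.

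Next, for each of the finitely many degrees $d$ with $2\le d<D_0$ I would invoke Theorem~\ref{Looper} with $K=\mathbb{Q}$: when $d\ge 5$ the $abc$-conjecture supplies a constant $B(d,\mathbb{Q})$ bounding the number of preperiodic points of $f_{d,c}$ over $\mathbb{Q}$ uniformly in $c$, and when $2\le d\le 4$ the $abcd$-conjecture supplies the analogous constant. Setting
\[
B_0=\max\{\,4,\ B(2,\mathbb{Q}),\,B(3,\mathbb{Q}),\,\dots,\,B(D_0-1,\mathbb{Q})\,\}
\]
gives one constant bounding $\#\pper(f_{d,c},\mathbb{P}^1(\mathbb{Q}))$ for all $d\ge 2$ and all $c\in\mathbb{Q}$: degrees $d\ge D_0$ are controlled by the term $4$, and each remaining degree by its own term. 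Since the maximum runs over a finite set, $B_0$ is an absolute constant, which is what is asserted.

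The argument is essentially bookkeeping once the two substantive inputs are in place, so there is no serious obstacle; the only point requiring attention is that the degree ranges of the two theorems overlap with no gap, i.e.\ Theorem~\ref{unifQ} must apply to \emph{all} degrees from $D_0$ onward and Theorem~\ref{Looper} to all degrees below it, and one must keep track of whether the small-degree bounds $B(d,\mathbb{Q})$ require the full $abcd$-conjecture or only $abc$. If one instead uses the explicit form of the $abc$-conjecture, Theorem~\ref{unifQ} gives $D_0=7$, so only $d\in\{2,3,4,5,6\}$ need the separate application of Theorem~\ref{Looper}, and of these the odd degrees $3$ and $5$ are already handled unconditionally by Theorem~\ref{odd}; thus in practice $B_0$ can be taken to be the maximum of $4$ and the three bounds $B(2,\mathbb{Q})$, $B(4,\mathbb{Q})$, $B(6,\mathbb{Q})$.
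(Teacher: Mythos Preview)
Your proof is correct and is precisely the argument the paper has in mind: it states only that Theorem~\ref{Looper} and Theorem~\ref{unifQ} ``immediately imply'' the result, and your write-up is the natural fleshing out of that implication---uniform bound $4$ from Theorem~\ref{unifQ} for $d\ge D_0$, finitely many bounds $B(d,\mathbb{Q})$ from Theorem~\ref{Looper} for $2\le d<D_0$, then take the maximum. The additional remarks about the explicit $abc$-conjecture and the odd-degree cases are correct refinements but not part of the paper's stated proof.
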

\noindent {\bf Section~\ref{proofH81}.} The proof, provided by Andrew Bremner, describes the rational points on the hyperelliptic curve given by $H:y^2=x^5+81$. This result is used to analyze rational periodic points of exact period 2 of $f(x)=x^6+c$.\\
{\bf Section~\ref{prooflemmas}.}  The proofs of necessary lemmas for Theorem~\ref{unifQ} are provided.
\section{2-Periodic Points of $f_{d,c}$ for $d=4,6$}\label{period2_46}
For an even positive integer $d$, it is easy to see that $f_{d,0}(x)=x^d$ has rational periodic points of exact period 1, i.e., $0$ and $1$. Similarly, $f_{d,-1}(x)=x^d-1$ has rational periodic points of exact period 2, i.e., $0$ and $-1$. We will call these two cases, i.e., $c\in \{0,-1\}$, as {\bf trivial}. On the other hand, it is easy to see that if $f_{d,c}=x^d+c$ has a rational point $x\in\{0,1\}$ of period 1 or a set $\{0,-1\}$ of exact period 2, then $c=0$ or $c=-1$, respectively.\\ 
 For a polynomial $f(x)\in \mathbb{Q}[x]$, we define the $2^{nd}$ {\it dynatomic polynomial} $\Phi_2(x)$ as $$\Phi_2(x)=\dfrac{f^2(x)-x}{f(x)-x}.$$
It is easy to see that the roots of the $2^{nd}$ dynatomic polynomial are periodic points of period 2.
 For more details on dynatomic polynomials, see \cite{MR2316407}.
\\The following theorem describes all rational periodic points of exact period 2 for $f_{4,c}(x)=x^4+c$. 
\begin{theorem}\label{period4} 
There are infinitely many $c\in\mathbb{Q}$ such that $f_{4,c}(x)=x^4+c$ has rational periodic points of exact period 2. The parametrization of such $c$ and 2-periodic points $x_1,x_2$ are as follows. The parameter $c$ is in the form  $$\displaystyle c=\dfrac{t^6+4t^3-1}{4t^2}$$ and the 2-periodic points are
$$\displaystyle x_1,x_2=\dfrac{t^2\pm\sqrt{-t^4-2 t}}{2 t},$$ where $-t^4-2 t=y^2$ for some $y\in\mathbb{Q}$.
\end{theorem}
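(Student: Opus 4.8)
The plan is to encode the exact-period-$2$ condition in the elementary symmetric functions of the cycle and then impose rationality. Suppose $\{x_1,x_2\}$ is a cycle of exact period $2$ for $f_{4,c}$ with $c\in\mathbb{Q}$; then $x_1^4+c=x_2$, $x_2^4+c=x_1$, $x_1\neq x_2$, and $x_1,x_2\in\mathbb{Q}$ automatically. Put $t=x_1+x_2$ and $p=x_1x_2$. Subtracting the two cycle equations and cancelling the nonzero factor $x_1-x_2$ gives $(x_1+x_2)(x_1^2+x_2^2)=-1$, i.e.\ $t(t^2-2p)=-1$; in particular $t\neq0$ and $p=\frac{t^3+1}{2t}$. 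Adding the two equations gives $x_1^4+x_2^4+2c=t$, and substituting $x_1^4+x_2^4=(t^2-2p)^2-2p^2$ together with the relations above yields, after simplification, $c=\frac{t^6+4t^3-1}{4t^2}$, which is the asserted shape of $c$.

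Since $x_1,x_2$ are the roots of $X^2-tX+p$, they lie in $\mathbb{Q}$ iff $t^2-4p$ is a square in $\mathbb{Q}$; as $t\neq0$, this is equivalent to $t^2(t^2-4p)=-t^4-2t$ being a square, say $-t^4-2t=y^2$, and then $x_1,x_2=\frac{t\pm\sqrt{t^2-4p}}{2}=\frac{t^2\pm y}{2t}$ (one checks directly that $\frac{t^2+y}{2t}$ and $\frac{t^2-y}{2t}$ have sum $t$ and product $p$). This shows every rational exact-period-$2$ cycle has the stated form. Conversely, any $t\in\mathbb{Q}^\times$ with $-t^4-2t$ equal to a rational square produces, through these formulas, a genuine exact-period-$2$ cycle: a short polynomial division shows $X^2-tX+p$ divides $X^4+X+(c-t)$, so $f_{4,c}(x_i)=t-x_i=x_{3-i}$; the two points are distinct because $-t^4-2t\neq0$ (the equation $t^3=-2$ has no rational solution); and neither is a fixed point, since $f_{4,c}(x_i)=x_i$ would force $x_1=x_2$.

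It remains to produce infinitely many such $c$. As $c$ is a non-constant rational function of $t$, it suffices to exhibit infinitely many $t\in\mathbb{Q}^\times$ for which $-t^4-2t$ is a rational square, i.e.\ infinitely many rational points with $t\neq0$ on the curve $\mathcal{C}:y^2=-t^4-2t$. This curve carries the rational point $(t,y)=(-1,1)$, hence is an elliptic curve; the substitution $t=1/u$, followed by $u=-U/2$ and $V=2yu^2$, puts it in Weierstrass form $E:V^2=U^3-4$. On $E$ the point $P=(2,2)$ has $2P=(5,-11)$, which differs from $O$ and from $\pm P$, so $P$ has infinite order and $E(\mathbb{Q})$ is infinite; transporting these points back gives infinitely many admissible $t$, and hence infinitely many $c$. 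I expect the only substantive step to be this last one — identifying the Weierstrass model $E:V^2=U^3-4$ and checking it has positive rank via the non-torsion point $(2,2)$; one could alternatively cite the rank of this standard curve. Everything else is routine symmetric-function bookkeeping.
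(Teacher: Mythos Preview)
Your argument is correct and arrives at the same parametrization as the paper, but by a genuinely different and more elementary route. The paper works with the second dynatomic polynomial $\Phi_2(x,c)$, eliminates $x$ against the trace relation $t=x+f_{4,c}(x)$ via a resultant to obtain $c=\frac{t^6+4t^3-1}{4t^2}$, and then factors $\Phi_2(x,c(t))$ to extract the quadratic $2tx^2-2t^2x+(t^3+1)=0$. You bypass all of this machinery: subtracting and adding the two cycle equations and writing everything in the elementary symmetric functions $t=x_1+x_2$, $p=x_1x_2$ gives $p$ and then $c$ directly, and the same quadratic $X^2-tX+p$ falls out. Your approach is shorter and requires no computer algebra; the paper's resultant method, on the other hand, generalizes more mechanically to higher $d$ or longer periods where the symmetric-function bookkeeping becomes unwieldy. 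For the infinitude of $c$, the paper simply asserts $E(\mathbb{Q})\cong\mathbb{Z}$ for $y^2=-t^4-2t$, whereas you supply an explicit Weierstrass model $V^2=U^3-4$ and a candidate point of infinite order.

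One small caveat: from $2P=(5,-11)\neq O,\pm P$ you only exclude orders $1$ through $4$; to conclude $P$ has infinite order you should add a sentence invoking Nagell--Lutz (since $11^2\nmid\Delta(E)$, the point $2P$ cannot be torsion), or simply cite the known rank of $V^2=U^3-4$ as you suggest.
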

\begin{proof}
Consider the $2^{nd}$ dynatomic polynomial $\Phi_2(x,c)$ of $f_{4,c}(x).$
We have
\begin{align*}
    \Phi_2(x,c)&=\displaystyle\frac{f_{4,c}^2(x)-x}{f_{4,c}(x)-x}\\
    &=x^{12} + x^9 + 3 c x^8 + x^6 + 2 c x^5 + 3 c^2 x^4 + x^3 + c x^2 + 
 c^2 x + 1 + c^3.
\end{align*}
 To find the roots of $\Phi_2$, we solve for the zeroes of the resultant of $\Phi_2(x,c)$ and the trace of 2-periodic orbits $$T(t,c):=t-(x+f_{4,c}(x))$$ with respect to $x$. We have $$\res(\Phi_2,T)=(1 + 4 c t^2 - 4 t^3 - t^6)^2=0.$$ Thus, $$1 + 4 c t^2 - 4 t^3 - t^6=0,$$ or equivalently, $$\displaystyle c=\frac{t^6+4t^3-1}{4t^2}.$$  The $2^{nd}$ dynatomic polynomial satisfies  
\begin{align*}
0=\Phi_2(x,c)&=\Phi_2(x,\dfrac{t^6+4t^3-1}{4t^2})\\
&=\dfrac{\left(t^3-2 t^2 x+2 t x^2+1\right) \left(t^{15}+2 t^{14} x+\dots+8 t^2 x^4+2 t^2 x+2 t
   x^2-1\right)}{64 t^6}.
\end{align*}
We solve for rational solutions to the equation $$t^3-2 t^2 x+2 t x^2+1=0,$$ equivalently, $$x=\dfrac{t^2\pm\sqrt{-t^4-2 t}}{2 t}.$$ Therefore,  $(x,t)\in\mathbb{Q}^2$ if and only if there is $y\in \mathbb{Q}$ such that $y^2=-t^4-2 t$.
 Consider the elliptic curve $E: y^2=-t^4-2t$. We have that $E(\mathbb{Q})\cong \mathbb{Z}$. Therefore, there are infinitely many $c\in\mathbb{Q}$ such that $f_{4,c}$ has rational periodic points of exact period 2.\\
\end{proof}
\begin{example}
For $ t_0=-\dfrac{2}{5}$, the parameter $c$ and $2$-periodic points $x_1,x_2$ are \\$ c=\dfrac{t_0^6+4t_0^3-1}{4t_0^2}=-\dfrac{19561}{10000}$ and $x_1,x_2=\dfrac{t_0^2\pm\sqrt{-t_0^4-2 t_0}}{2 t_0}=-\dfrac{13}{10},\dfrac{9}{10}.$\\ For $f(x)=x^4-\dfrac{19561}{10000},$ we have $f(-\dfrac{13}{10})=\dfrac{9}{10}$ and $f(\dfrac{9}{10})=-\dfrac{13}{10}.$
\end{example}
Next, we show in Theorem~\ref{period6} that $f_{6,c}(x)=x^6+c$ has no rational points of exact period 2 for $c\in \mathbb{Q}\backslash \{-1\}$. In order to prove this theorem, we need to find the set of rational points on a hyperelliptic curve. The result is in the following theorem.
\begin{theorem}\label{H81}
Let $H: y^2=x^5+81.$ Then $H(\mathbb{Q})=\{(2,\pm 7),(0,\pm 9),(3,\pm 18),\infty\}.$
\end{theorem}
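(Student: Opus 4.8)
The plan is to treat $H\colon y^{2}=x^{5}+81$ as a curve of genus $2$ carrying a single rational Weierstrass point $\infty$ at infinity, and to control $H(\mathbb{Q})$ through the embedding $P\mapsto[(P)-(\infty)]$ of $H$ into its Jacobian $J$. One free piece of information is torsion: the function $y-9$ vanishes to order $5$ at $(0,9)$ and has a pole of order $5$ at $\infty$, so $\operatorname{div}(y-9)=5\,(0,9)-5\,(\infty)$ and $[(0,9)-(\infty)]$ has exact order $5$ in $J(\mathbb{Q})$, with $[(0,-9)-(\infty)]$ its inverse. To bound the rank of $J(\mathbb{Q})$ I would run a (fake) $2$-descent. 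Since $x^{5}+81$ is irreducible over $\mathbb{Q}$, with splitting information carried by the quintic field $L=\mathbb{Q}[x]/(x^{5}+81)\cong\mathbb{Q}(\sqrt[5]{3})$, the descent is governed by the arithmetic of $L$ --- its class group, its units, and its completions at the primes above $2,3,5$ (the support of $\operatorname{disc}(H)$) --- via $(x,y)\mapsto x-\theta\in L^{\times}/(L^{\times})^{2}$, and I expect this to yield $\operatorname{rank}J(\mathbb{Q})\le 1$.

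There are then two natural ways to finish. If the rank is $0$, $J(\mathbb{Q})$ is finite: determine it by reduction modulo several primes of good reduction (e.g.\ $p=7,11,13$), then a short Riemann--Roch computation on $H$ identifies which classes of $J(\mathbb{Q})$ have the form $[(P)-(\infty)]$ with $P\in H(\mathbb{Q})$. If the rank is $1$, apply Chabauty--Coleman at a prime $p$ of good reduction: choose a regular differential $\omega$ on $H/\mathbb{Q}_{p}$ whose Coleman integral kills $J(\mathbb{Q})$, bound the zeros of $\int\omega$ in each residue disc of $\widetilde H(\mathbb{F}_{p})$, and sum. The principal obstacle lies here: fifth powers are badly distributed modulo $p$ (the map $x\mapsto x^{5}$ is five-to-one on $\mathbb{F}_{p}^{\times}$ when $p\equiv 1\pmod 5$, and already $\#\widetilde H(\mathbb{F}_{7})=10>7$), so the naive count of residue discs overshoots the seven points and the raw Chabauty bound is not sharp. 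Sharpening it to $7$ requires either a Mordell--Weil sieve, or pinning down the two ($=2g-2$) zeros of the annihilating differential and checking that they avoid the residue discs with no visible rational point.

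An alternative that bypasses $\operatorname{rank}J(\mathbb{Q})$ is a two-cover descent: a rational point of $H$ lies, for exactly one $\delta$ in a finite ``fake $2$-Selmer'' set, on the cover $z^{2}=\delta\,(x-\theta)$ over $L$, and hence gives an $L$-point with rational $x$-coordinate on the genus-$1$ curve $w^{2}=\delta'\,g(x)$ over $L$, where $g(x)=(x^{5}+81)/(x-\theta)$ is the quartic cofactor and $\delta\delta'$ is a square; one then applies elliptic-curve Chabauty over $L$ to each such curve, which works when the relevant Mordell--Weil ranks over $L$ are smaller than $[L:\mathbb{Q}]=5$. For comparison, the \emph{integral} points alone are elementary: clearing denominators gives $(t-9)(t+9)=m^{5}$ with the gcd of the two factors dividing $18$, reducing to a few degree-$5$ Thue equations (for instance $b^{5}-3c^{5}=2$); but excluding non-integral rational points needs one of the descents above, and the hard part of the proof is making it effective --- the sieve in the Jacobian approach, or the Mordell--Weil computations in the degree-$5$ field $L$ in the elliptic-Chabauty approach.
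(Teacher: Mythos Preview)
Your primary line of attack rests on the expectation that $\operatorname{rank} J(\mathbb{Q})\le 1$, and this is precisely where it fails: in fact $\operatorname{rank} J(\mathbb{Q})=2$. The classes $[(-2,7)-(\infty)]$ and $[(3,18)-(\infty)]$ are independent modulo the order-$5$ torsion you identified (this is stated in the paper and can be checked by a height-pairing computation). Since the genus is also $2$, classical Chabauty--Coleman is simply unavailable---the space of regular $1$-forms over $\mathbb{Q}_p$ annihilating $J(\mathbb{Q})$ is zero---and no Mordell--Weil sieve or refined zero-count of an annihilating differential can repair this, because there is no such nonzero differential. Your $2$-descent will return a Selmer bound of at least $2$ and the plan stalls.

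Your alternative, however, is exactly the route the paper (via Bremner) takes, and it succeeds. Writing $\theta^{5}=3$ so that $x^{5}+81z^{10}=(x+\theta^{4}z^{2})\cdot g(x,z)$ over $K=\mathbb{Q}(\theta)$, one uses that $K$ has class number $1$ with fundamental units $\epsilon_{1},\epsilon_{2}$, shows the two factors have ideal gcd equal to $1$ or $(\theta)^{4}$ (the primes above $5$ are excluded by a congruence argument), and reduces to four genus-$1$ twists $C_{u}\colon g(X,1)=u^{-1}b^{2}$ over $K$ with $u\in\{1,\epsilon_{1},\epsilon_{2},\epsilon_{1}\epsilon_{2}\}$. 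Each $C_{u}$ carries a visible $K$-point, is mapped to a Weierstrass cubic $E_{u}/K$, and elliptic Chabauty (in Magma, with auxiliary primes $7$ or $11$) determines all $K$-points with $X\in\mathbb{Q}$; this works because each $\operatorname{rank} E_{u}(K)<[K:\mathbb{Q}]=5$. So you should discard the Jacobian-Chabauty branch entirely and commit to the two-cover/elliptic-Chabauty route you sketched as a backup---that is the actual proof.
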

\begin{proof}
The proof is in Section \ref{proofH81}.
\end{proof}
\begin{theorem}\label{period6}
Let $f_{6,c}(x)=x^6+c$, where $c\in \mathbb{Q}\backslash \{-1\}$. Then $f_{6,c}$ has no rational point of exact period 2.  
\end{theorem}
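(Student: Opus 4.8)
The plan is to mimic the computation carried out for $d=4$ in Theorem~\ref{period4}, and then to reduce the resulting Diophantine problem to the determination of $H(\mathbb{Q})$ supplied by Theorem~\ref{H81}. First I would form the $2^{\mathrm{nd}}$ dynatomic polynomial $\Phi_2(x,c)$ of $f_{6,c}(x)=x^6+c$ and the trace polynomial $T(t,c)=t-\bigl(x+f_{6,c}(x)\bigr)$, and compute $\res_x(\Phi_2,T)$. As in the degree-$4$ case this resultant should factor as a perfect power of an irreducible polynomial $G(t,c)$, and setting $G(t,c)=0$ gives $c$ as an explicit rational function of $t$; substituting this value of $c$ back into $\Phi_2(x,c)$ should make it factor, with one factor being a quadratic (or low-degree) polynomial in $x$ whose coefficients are polynomials in $t$. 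Solving that factor for $x$ by the quadratic formula then expresses the $2$-periodic points in terms of $t$ and a square root $\sqrt{R(t)}$, so that rational $2$-periodic points exist precisely when $R(t)=y^2$ has a rational solution $(t,y)$.

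The next step is to identify the curve $R(t)=y^2$. I expect, after clearing denominators and applying a suitable rational change of variables $t\mapsto$ (scaling and/or translation), that this curve is birational over $\mathbb{Q}$ to the hyperelliptic curve $H:y^2=x^5+81$ of Theorem~\ref{H81} (the appearance of the exponent $5$ and the constant $81=3^4$ is what one would predict from the shape of the degree-$6$ computation). Granting that identification, Theorem~\ref{H81} gives $H(\mathbb{Q})=\{(2,\pm7),(0,\pm9),(3,\pm18),\infty\}$, a finite explicit list. I would then pull each of these finitely many points back through the birational map to recover the finitely many admissible values of $t$ (discarding the point at infinity and any points where the map is undefined or where denominators vanish), and for each such $t$ compute the corresponding $c$ and the corresponding pair $x_1,x_2$.

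Finally, for each surviving candidate I would check directly whether the points obtained actually have \emph{exact} period $2$ under $f_{6,c}$, i.e.\ that they are genuinely $2$-cycles and not fixed points, and whether $c\neq -1$. The expectation is that every point of $H(\mathbb{Q})$ either leads to $c=-1$ (the trivial case excluded by hypothesis) or leads to a pair that collapses to a fixed point or to a non-rational value, so that no $c\in\mathbb{Q}\setminus\{-1\}$ survives; this yields the theorem. The main obstacle is the middle step: verifying that the curve $R(t)=y^2$ coming out of the resultant computation really is $\mathbb{Q}$-birational to $H:y^2=x^5+81$, and doing the bookkeeping of the birational correspondence carefully enough that no rational point of the original curve is lost (in particular handling points at infinity and the branch locus of the change of variables). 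The first step is a mechanical but somewhat heavy resultant computation best done with a computer algebra system, and the last step is a short finite verification.
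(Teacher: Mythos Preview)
Your proposal is correct and follows essentially the same approach as the paper: compute the resultant of $\Phi_2(x,c)$ with the trace $T(t,c)$, reduce the resulting curve to $H:y^2=x^5+81$, invoke Theorem~\ref{H81} to list $H(\mathbb{Q})$, and then check the finitely many pullbacks. The paper's write-up is much terser than yours---it simply says ``the resultant curve is transformed to $H$'' and reports that the only values of $c$ arising are $c=0,-1$ (both trivial, since $c=0$ yields only fixed points)---but the strategy and the key input (Theorem~\ref{H81}) are identical.
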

\begin{proof}
We compute the resultant of dynatomic polynomial of $f_{6,c}(x)=x^6+c$ and the trace $t=x_1+x_2$. The resultant curve is transformed to $H: y^2=x^5+81.$ By Theorem~\ref{H81}, $$H(\mathbb{Q})=\{(2,\pm 7),(0,\pm 9),(3,\pm 18),\infty\}.$$ Then we check all corresponding points $H$ and values of $c.$ We have only $c=0,-1$ which are trivial.\\

\end{proof}
Theorem~\ref{period6} can also be proved using a result of a Fermat-Catalan equation $$x^3+y^3=z^5$$ (see \cite{MR1850605}).
We will prove more general cases via the Fermat-Catalan equations in the next section. The main result of Section 3 is that $f_{d,c}(x) = x^{2k} + c$ has no non-trivial 2-periodic points for infinitely
many values of $k$.
\section{2-Periodic Points of $f_{d,c}(x)=x^d+c$ and Fermat-Catalan Equations}\label{FCeq}
Walde and Russo \cite{MR1270956} gave the description of parameter $c$ and the corresponding periodic points of  $f_{2,c}(x)=x^2+c$ over $\mathbb{Q}$ . Narkiewicz \cite{MR2674537} and Sadek \cite{Sadek} extended the results to the family of functions $f_{d,c}(x)=x^d+c$ over a number field $K$. The following lemma can be proved by a similar argument to Lemma 7 in Hutz's article (see \cite{MR3266961}). His argument works for preperiodic points as well.
\begin{lemma}\cite{MR3266961}
Let $f_{d,c}(x)=x^d+c\in K[x]$ and let $\alpha\in \mathbb{A}(K)$ be a preperiodic point of $f_{d,c}$. Then for each nonarchimedean place $v$ such that $v(c)<0$, we have $v(c)=dv (\alpha)$.  For each nonarchimedean place $v$ such that $v(c)\geq 0$, we have $v (\alpha)\geq 0$.
\end{lemma}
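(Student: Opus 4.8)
The plan is to work place-by-place, exploiting that $f_{d,c}(\alpha) = \alpha^d + c$ and that preperiodicity forces the forward orbit $\{\alpha, f_{d,c}(\alpha), f_{d,c}^2(\alpha), \dots\}$ to be finite, hence to have bounded (in fact, uniformly bounded) valuation at every nonarchimedean place $v$. Fix such a $v$ and write $m = \min_{n \ge 0} v(f_{d,c}^n(\alpha))$, the minimum valuation occurring along the orbit; this minimum is attained since the orbit is finite. Every point $\beta$ in the orbit satisfies $v(\beta) \ge m$, and $\beta^d + c$ is again in the orbit, so $v(\beta^d + c) \ge m$ as well.

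First I would handle the case $v(c) \ge 0$. Suppose for contradiction that $v(\alpha) < 0$, so $m < 0$. Pick a point $\beta$ in the orbit with $v(\beta) = m < 0$. Then $v(\beta^d) = dm < m \le 0 \le v(c)$, so by the ultrametric inequality $v(\beta^d + c) = dm$. But $\beta^d + c$ lies in the orbit, forcing $dm \ge m$, i.e. $(d-1)m \ge 0$, contradicting $m < 0$ since $d \ge 2$. Hence $m \ge 0$, and in particular $v(\alpha) \ge 0$.

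Next I would treat the case $v(c) < 0$. I claim $m = v(c)/d$ — or rather, more precisely, I claim that $v(\alpha) = v(c)/d$; note this already forces $d \mid v(c)$. Again let $\beta$ attain $v(\beta) = m$ along the orbit. If $dm < v(c)$, then $v(\beta^d + c) = dm < m$, contradicting minimality; so $dm \ge v(c)$, i.e. $m \ge v(c)/d$. Conversely, since $v(c) < 0 \le$ (what we want to rule out), consider any orbit point $\gamma$ and its successor $\gamma^d + c$: if $v(\gamma) > v(c)/d$ then $v(\gamma^d) = d\,v(\gamma) > v(c)$, so $v(\gamma^d + c) = v(c) < v(c)/d \cdot d \le \dots$; the point is that $v(\gamma^d+c) = v(c)$, and then iterating, $v(f_{d,c}(\gamma)^d + c)$ equals $v(\beta^d+c)$ where $\beta = f_{d,c}(\gamma)$ has $v(\beta)=v(c)$, giving $v(\beta^d) = d\,v(c) < v(c)$, hence $v(\beta^d+c)=d\,v(c)$, which is $< m$ unless $m \le d\,v(c)$. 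Chasing these inequalities around the finite cycle that the orbit eventually enters pins every orbit point to valuation exactly $v(c)/d$; in particular $v(\alpha) = v(c)/d$, which rearranges to $v(c) = d\,v(\alpha)$.

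The main obstacle is the bookkeeping in the second case: one must show the valuation cannot "escape" to a more negative value and also cannot stabilize at $v(c)$ itself, and the clean way to do this is to restrict attention to the periodic cycle $C$ that the orbit of $\alpha$ eventually falls into (which exists by preperiodicity). On $C$ the map $f_{d,c}$ permutes the points, so $\sum_{\beta \in C} v(\beta) = \sum_{\beta \in C} v(\beta^d + c)$; combined with the per-point inequality $v(\beta^d+c) \ge \min(d\,v(\beta), v(c))$ and the fact that the only way a sum of terms each $\ge$ a bound can equal the sum of the bounds' witnesses is termwise equality, one forces $v(\beta^d + c) = \min(d\,v(\beta), v(c))$ for every $\beta \in C$, and then a short argument (if $d\,v(\beta) \ne v(c)$ for some $\beta$, the minimum strictly drops somewhere and cannot recover) yields $v(\beta) = v(c)/d$ throughout $C$. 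Finally, pulling back along the pre-periodic tail — each step $\alpha \mapsto \alpha^d + c$ toward the cycle preserves the valuation $v(c)/d$ by the same ultrametric computation — gives $v(\alpha) = v(c)/d$, completing the proof. I would remark that this is exactly the structure of Hutz's Lemma 7 in \cite{MR3266961}, now carried out for preperiodic rather than merely periodic points.
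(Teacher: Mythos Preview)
The paper does not supply its own proof of this lemma: it merely cites Hutz's Lemma~7 and remarks that the same argument goes through for preperiodic points. So there is nothing in the paper to compare against beyond that reference, and your approach is the standard one.

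Your treatment of the case $v(c)\ge 0$ is clean and correct. Your treatment of the case $v(c)<0$ also contains a correct proof, but it is buried under unnecessary complication. Once you have established $m\ge v(c)/d$ (your first step, which is fine), you are one line from done: if $v(\alpha)>v(c)/d$ then $v(\alpha^d)>v(c)$, whence $v(f_{d,c}(\alpha))=v(c)<v(c)/d\le m$, contradicting that $f_{d,c}(\alpha)$ lies in the orbit; hence $v(\alpha)=v(c)/d$. There is no need to pass to the eventual periodic cycle or to pull back along the tail. The sum-over-the-cycle argument you sketch in the ``main obstacle'' paragraph is not only superfluous but not correct as written: from $\sum_{\beta\in C}v(\beta)=\sum_{\beta\in C}v(\beta^d+c)$ together with the termwise bound $v(\beta^d+c)\ge\min(d\,v(\beta),v(c))$ you cannot conclude termwise equality, because the lower bounds $\min(d\,v(\beta),v(c))$ do not themselves sum to $\sum_{\beta\in C} v(\beta)$. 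Fortunately you do not actually rely on that device, since the needed contradiction already appears (in somewhat tangled form) in your preceding paragraph.
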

The above lemma immediately implies the following corollary.
\begin{corollary}\label{ratform}
If $X/Z$ is a rational preperiodic point of $f_{d,c}(x)=x^d+c$ with $c=M/N$ and both $X/Z$ and $M/N$ are rational numbers expressed in the lowest terms with $Z$ and $N$ positive integers, then $N=Z^d.$
\end{corollary}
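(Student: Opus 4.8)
The plan is to argue place-by-place over $\mathbb{Q}$, where the nonarchimedean places are exactly the $p$-adic valuations $v_p$ for rational primes $p$. Write $\alpha=X/Z$ and $c=M/N$ in lowest terms with $Z,N>0$. First I would translate the coprimality hypotheses into valuation statements: for a prime $p$ we have $v_p(c)=-v_p(N)$ when $p\mid N$ and $v_p(c)=v_p(M)\ge 0$ otherwise; likewise $v_p(\alpha)=-v_p(Z)$ when $p\mid Z$ and $v_p(\alpha)\ge 0$ otherwise. The desired identity $N=Z^d$ is then equivalent, by unique factorization and positivity of $N$ and $Z$, to showing $v_p(N)=d\,v_p(Z)$ for every prime $p$.

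Next I would invoke the preceding lemma in both directions. If $p\mid N$, then $v_p(c)=-v_p(N)<0$, so the lemma gives $v_p(c)=d\,v_p(\alpha)$, i.e. $d\,v_p(\alpha)=-v_p(N)<0$; hence $v_p(\alpha)<0$, so $p\mid Z$ with $v_p(\alpha)=-v_p(Z)$, and therefore $v_p(N)=d\,v_p(Z)$. Conversely, if $p\mid Z$, then $v_p(\alpha)=-v_p(Z)<0$; since the second clause of the lemma would force $v_p(\alpha)\ge 0$ whenever $v_p(c)\ge 0$, we must instead have $v_p(c)<0$, i.e. $p\mid N$, and the previous case then gives $v_p(N)=d\,v_p(Z)$ once more.

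Combining these, the set of primes dividing $N$ coincides with the set of primes dividing $Z$, and for each such prime $v_p(N)=d\,v_p(Z)$, while for all other primes both sides vanish. Hence $N=\prod_p p^{v_p(N)}=\prod_p p^{d\,v_p(Z)}=Z^d$, as claimed. There is no genuinely hard step here; the only point requiring care is the bookkeeping that forces the supports of $N$ and $Z$ to match exactly, so that no prime factor of $Z$ fails to appear (to the $d$-th power) in $N$ and vice versa — and running the lemma in both directions is precisely what secures that.
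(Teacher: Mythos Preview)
Your proof is correct and is exactly the argument the paper intends: the paper gives no separate proof, simply stating that the corollary follows immediately from the preceding lemma, and your place-by-place valuation bookkeeping is precisely how that implication unfolds. In particular, your two-directional use of the lemma---first for $p\mid N$ to get $v_p(N)=d\,v_p(Z)$, then for $p\mid Z$ to force $p\mid N$---is the standard and expected unpacking.
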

We will use Corollary~\ref{ratform} to show that the periodic points of exact period 2 of $f_{d,c}(x)=x^d+c$ can be described in the form of a Fermat-Catalan equation in the following theorem.
\begin{theorem}\label{FermatCatalan}
Let $k,n$ be positive integers such that $n>1$, and $f_{d,c}(x)=x^d+c\in \mathbb{Q}[x]$, where $d=2k$. If $X_1/Z$ and $X_2/Z=f_{d,c}(X_1/Z)$ are rational periodic points of exact period $n$ of $f_{d,c}$ with $c=C/Z^d$, and $X_1/Z, X_2/Z$ and $C/Z^d$ are rational numbers expressed in the lowest terms with integers $X_1,X_2$ and $Z$, then $\gcd(X_1,X_2)=1.$ Moreover, for $n=2$,
\begin{itemize}
    \item[$a)$] $X_1^k+X_2^k=\delta Z_1^{2k-1}$ for some $Z_1\in \mathbb{Z}$ and $\delta\in\{1,2\}$,
    \item[$b)$] if $k$ is odd, then $X_1^k+X_2^k= Z_1^{2k-1}$ for some $Z_1\in \mathbb{Z}.$
\end{itemize}
\end{theorem}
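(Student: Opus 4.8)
The plan is to start from the two defining relations of a 2-periodic orbit. Writing $x_1 = X_1/Z$, $x_2 = X_2/Z$ with $x_2 = f_{d,c}(x_1)$ and $x_1 = f_{d,c}(x_2)$, and using $c = C/Z^d$ from Corollary~\ref{ratform}, I would clear denominators to get
\begin{align*}
X_2 Z^{d-1} &= X_1^d + C,\\
X_1 Z^{d-1} &= X_2^d + C.
\end{align*}
Subtracting gives $(X_2 - X_1)Z^{d-1} = X_1^d - X_2^d = -(X_2^d - X_1^d)$, and adding gives $(X_1+X_2)Z^{d-1} = X_1^d + X_2^d + 2C$. The coprimality claim $\gcd(X_1,X_2)=1$ should come first: any prime $p$ dividing both $X_1$ and $X_2$ would, by the subtraction relation, divide $(X_2-X_1)Z^{d-1}$ in a way that is already automatic, so instead I would look at the individual equations $X_2 Z^{d-1} = X_1^d + C$: if $p \mid X_1, p\mid X_2$ then $p \mid C$, contradicting that $c = C/Z^d$ is in lowest terms (note $p \nmid Z$ since $X_1/Z$ is in lowest terms). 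Actually one must be slightly careful because $n$ is general here; for the coprimality statement I only use the single functional equation $x_2 = f_{d,c}(x_1)$ plus $x_1$ having the same denominator, which holds for periodic points of any period $n$.

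**The factorization for $n=2$, part (a).** Now specialize to $n=2$, so $d = 2k$. From the subtraction relation,
\[
(X_1 - X_2) Z^{d-1} = X_2^d - X_1^d = (X_2^k - X_1^k)(X_2^k + X_1^k).
\]
Since $d$ is even, $X_2^d - X_1^d = (X_2^k)^2 - (X_1^k)^2$ factors as shown. On the other hand $X_2^k - X_1^k$ is divisible by $X_2 - X_1$, say $X_2^k - X_1^k = (X_2 - X_1)\cdot Q$ where $Q = \sum_{i=0}^{k-1} X_2^{k-1-i} X_1^i$. So the relation becomes $(X_1 - X_2)Z^{d-1} = (X_2 - X_1) Q (X_1^k + X_2^k)$, i.e. after cancelling $X_1 - X_2$ (the points are distinct since the period is exactly 2, and the trivial case $x_1 = x_2$ is excluded),
\[
-Z^{d-1} = Q\,(X_1^k + X_2^k), \qquad\text{equivalently}\qquad (X_1^k + X_2^k)\cdot(-Q) = Z^{2k-1}.
\]
Thus $X_1^k + X_2^k$ divides $Z^{2k-1}$. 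The key step is then to show that $X_1^k + X_2^k$ and $Z$ share almost no common factors: if a prime $p$ divides $X_1^k + X_2^k$ and $p \mid Z$, then since $\gcd(X_1 X_2, Z) = 1$ we get a contradiction unless... here I expect the only obstruction to be the prime $2$, which forces the $\delta \in \{1,2\}$. Concretely, I would argue that $\gcd(X_1^k + X_2^k, Z)$ is a power of $2$ and in fact at most $2$: any odd prime $p$ dividing both would divide $X_1^k + X_2^k$ and, being coprime to $Z$ is violated, so $p\nmid$ anything; for $p = 2$, if $X_1, X_2$ are both odd then $X_1^k + X_2^k$ is even and one checks $2 \| X_1^k+X_2^k$ when $k$ is... this parity bookkeeping is where part (b) will separate out. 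Writing $X_1^k + X_2^k = \delta Z_1^{2k-1}\cdot(\text{unit adjustment})$ and matching $p$-adic valuations on both sides of $(X_1^k+X_2^k)(-Q) = Z^{2k-1}$ prime by prime gives $X_1^k + X_2^k = \delta Z_1^{2k-1}$ with $\delta \mid 2$.

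**Part (b) and the main obstacle.** For part (b), when $k$ is odd I want to rule out $\delta = 2$. Here I would use parity: $d = 2k$ with $k$ odd, and examine the equation $X_2 Z^{d-1} = X_1^{2k} + C$ together with $\gcd(X_1, X_2) = 1$ modulo powers of $2$. The point is that $X_1, X_2$ cannot both be even (they're coprime), and I expect the argument to show they cannot both be odd either — or if they can, then $k$ odd makes $X_1^k + X_2^k \equiv X_1 + X_2 \pmod 2$ behave in a way that kills the factor $2$; more precisely, for $k$ odd, $X_1 + X_2 \mid X_1^k + X_2^k$, and combining with the structure of $Q$ one shows the $2$-adic valuation works out to allow $\delta = 1$ only. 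I anticipate the main obstacle is precisely this $2$-adic valuation analysis: tracking $v_2(X_1^k+X_2^k)$, $v_2(Q)$, and $v_2(Z)$ simultaneously, and showing that the extra factor of $2$ that appears generically (from $(X_1-X_2)$ or from lifting-the-exponent-type estimates) is absorbed differently depending on the parity of $k$. The rest — clearing denominators, the telescoping factorization of $X_2^k - X_1^k$, and matching valuations at odd primes — is routine; it is the behavior at $p = 2$, and correctly pinning down when $\delta$ must equal $1$, that requires care.
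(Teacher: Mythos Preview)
Your overall strategy matches the paper's: clear denominators, subtract the two orbit relations, factor $X_2^{2k}-X_1^{2k}=(X_2^k-X_1^k)(X_2^k+X_1^k)$, analyze gcds with the prime $2$ singled out, and invoke a lifting-the-exponent argument for part~(b). Two steps, however, do not go through as written.

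\textbf{Coprimality.} From $X_2 Z^{d-1}=X_1^d+C$ you deduce that a common prime $p$ of $X_1,X_2$ must divide $C$, and claim this contradicts $C/Z^d$ being in lowest terms. It does not: ``lowest terms'' only gives $\gcd(C,Z)=1$, and since $p\nmid Z$ there is no contradiction in $p\mid C$. The paper instead uses the telescoping product
\[
\prod_{i=1}^{n}\frac{X_i^{2k}-X_{i+1}^{2k}}{X_i-X_{i+1}}=Z^{\,n(2k-1)},
\]
so that any $q\mid\gcd(X_1,X_2)$ divides the $i=1$ factor (a sum of monomials in $X_1,X_2$), hence $q\mid Z$, contradicting $\gcd(X_1,Z)=1$. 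For $n=2$ the needed relation is exactly your subtraction identity divided by $X_1-X_2$; you dismissed it too quickly as ``automatic.''

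\textbf{The wrong gcd in part (a).} You propose to show $\gcd(X_1^k+X_2^k,\,Z)$ is at most~$2$. This is false: for $d=2$ the $2$-cycle $x_1=-1/3$, $x_2=-2/3$ of $x^2-7/9$ has $X_1+X_2=-3$ and $Z=3$. In fact every prime factor of $X_1^k+X_2^k$ divides $Z$, since $X_1^k+X_2^k\mid Z^{2k-1}$; the issue is not \emph{which} primes occur but whether each $p$-adic valuation is a multiple of $2k-1$. The relevant gcd is therefore the one between the two \emph{cofactors}, namely $\gcd\!\bigl(X_1^k+X_2^k,\ \tfrac{X_1^k-X_2^k}{X_1-X_2}\bigr)$, which divides $\gcd(X_1^k+X_2^k,\,X_1^k-X_2^k)\mid 2$ once $\gcd(X_1,X_2)=1$ is established. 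When this gcd is $1$ one gets $X_1^k+X_2^k=Z_1^{2k-1}$; when it is $2$ one checks $2\,\|\,X_1^k+X_2^k$ and obtains $\delta=2$. For~(b) your instinct is correct: when $k$ is odd, lifting-the-exponent shows $\tfrac{X_1^k-X_2^k}{X_1-X_2}$ is odd, forcing the gcd to be $1$ and hence $\delta=1$.
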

In the following proof, for integers $a, b$ and positive integer $n$, we write $a^n|| b$ if $a^n|b$ but $a^{n+1}\not\vert b$.
\begin{proof} Let $x_i=X_i/Z$ be rational periodic points of exact period $n$ of $f_{d,c}(x)$ for $i=1,\dots,n.$ Observe that 
$$\displaystyle \prod_{i=1}^{n}(x_i-x_{i+1})=\prod_{i=1}^{n}(f_{d,c}(x_i)-f_{d,c}(x_{i+1}))=\prod_{i=1}^{n}(x_i^{2k}-x_{i+1}^{2k}),$$
where $x_{n+1}=x_1$.
Thus, $\displaystyle \prod_{i=1}^{n}\dfrac{(x_i^{2k}-x_{i+1}^{2k})}{x_i-x_{i+1}}=1,$ or equivalently, $$\displaystyle \prod_{i=1}^{n}\dfrac{(X_i^{2k}-X_{i+1}^{2k})}{X_i-X_{i+1}}=Z^{n(2k-1)}.$$   Assume that there is $q\in \mathbb{N}$ such that $q\mid \gcd(X_1,X_2)$.\\
 Since $q\mid \gcd(X_1,X_2)$ and $$\displaystyle\prod_{i=1}^n(X_i^k+X_{i+1}^k)\frac{(X_i^k-X_{i+1}^k)}{X_i-X_{i+1}}=Z^{n(2k-1)},$$ we have $q\mid Z^{n(2k-1)}.$ However, $X_1$ and $Z$ are coprime. Thus, $q=1.$\\
 For period $n=2$, we have that $$\displaystyle(X_1^k+X_2^k)\frac{(X_1^k-X_2^k)}{X_1-X_2}=Z^{2k-1}.$$ Since $\gcd(X_1,X_2)=1$ and 
 $$\gcd(X_1^k+X_2^k,X_1^k-X_2^k)\mid\gcd(2X_1^k,2X_2^k),$$ we have that $\gcd(X_1^k+X_2^k,X_1^k-X_2^k)\mid 2.$\\ We consider 2 cases.\\
 \textbf{ Case 1. $\gcd(X_1^k+X_2^k,\dfrac{X_1^k-X_2^k}{X_1-X_2})=1$ }\\
 If $\gcd(X_1^k+X_2^k,\dfrac{X_1^k-X_2^k}{X_1-X_2})=1,$ then we easily have that $X_1^k+X_2^k=Z_1^{2k-1}$ for some integer $Z_1$ such that $Z_1\mid Z.$ \\ 
 \textbf{Case 2.} $\gcd(X_1^k+X_2^k,\dfrac{X_1^k-X_2^k}{X_1-X_2})=2$\\
 If $\gcd(X_1^k+X_2^k,\dfrac{X_1^k-X_2^k}{X_1-X_2})=2,$ then we have that $Z$ is even. Therefore, $X_1$ and $X_2$ are odd.\\
 If $4\mid X_1^k+X_2^k,$ then $2\edv X_1^k-X_2^k. $ Thus, $\dfrac{X_1^k-X_2^k}{X_1-X_2}$ is odd and $$\gcd(X_1^k+X_2^k,\dfrac{X_1^k-X_2^k}{X_1-X_2})=1,$$ a contradiction. Therefore, $2 \edv X_1^k+X_2^k.$ Thus, 
 $$X_1^k+X_2^k=2Z_1^{2k-1} \textrm{ and } \dfrac{X_1^k-X_2^k}{X_1-X_2}=2^{2k-2}Z_2^{2k-1}$$ for some integers $Z_1$ and $Z_2$ such that $\gcd(Z_1,Z_2)=1.$ \\
 If $k$ is an odd integer, then lifting the exponent lemma (see, for example, \cite{lte,titu}) implies that $\dfrac{X_1^k-X_2^k}{X_1-X_2}$ is odd. Thus, $$\gcd(X_1^k+X_2^k,\dfrac{X_1^k-X_2^k}{X_1-X_2})=1,$$ a contradiction.\\
From 2 cases, we can conclude that if $k$ is odd, then $X_1^k+X_2^k= Z_1^{2k-1}$ for some $Z_1\in \mathbb{Z}.$
\end{proof}
From the above, we can have a connection between the periodic points of exact period 2 of $f_{d,c}(x)=x^{2k}+c$ and the solutions to a corresponding ternary Diophantine equation   $$x^k+y^k=\delta z^{2k-1}$$ where $\delta\in\{1,2\}.$ To yield the result for the case when $k$ is an odd integer, it suffices to show the nonexistence of nontrivial solution to the Fermat-Catalan equation of signature $(k,k,2k-1)$ in the form $$x^k+y^k=z^{2k-1}.$$ For the case $k=3,$ we can yield the result  by employing the explicit Chabauty method as described by  Bruin \cite{MR1850605}. Note that we have also shown the nonexistence of nontrivial periodic points of exact period 2 of $f_{6,c}(x)=x^6+c$ via the Chabauty method in Theorem~\ref{H81}. The proof is more elementary than the proof in Bruin's paper since it deals directly with the periodic points of $f_{6,c}(x)$. The related Fermat-Catalan equations of signature $(n,n,p)$ as in, for example, \cite{MR1468926,MR2075481,MR2646760,MR3493372,MR1655978} are useful to study the periodic points of exact period 2 of $f_{d,c}.$ As an example, we can show the nonexistence of periodic points of exact period 2 of $f_{2k,c}(x)=x^{2k}+c$ when $3\mid 2k-1$ and $2k$ has a prime factor greater than 3. The following theorem is a partial result of Theorem 1.5 by Bennett, Vatsal and Yazdani in \cite{MR2098394}:
\begin{theorem}\label{BVY}\cite{MR2098394}
Let $C\in \{1, 2, 3, 5, 7, 11, 13, 15, 17, 19\}$ and prime number $n> \max\{C,4\}$, then the Diophantine equation
$x^n + y^n = Cz^3$ has no solutions in coprime nonzero integers $x, y$ and $z$ with $|xy| > 1$.
\end{theorem}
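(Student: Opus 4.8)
This is the special case of signature $(n,n,3)$ of the generalized Fermat equation $x^{n}+y^{n}=Cz^{3}$, the theorem of Bennett, Vatsal and Yazdani quoted here, and a conclusion uniform in the exponent $n$ is out of reach of the elementary or Chabauty-type methods used elsewhere in this paper for fixed small exponents; so the plan is to run the modular method. Fix a hypothetical counterexample: coprime nonzero integers $x,y,z$ with $x^{n}+y^{n}=Cz^{3}$, $|xy|>1$, and $n>\max\{C,4\}$ prime. First I would attach a Frey--Hellegouarch curve $E=E_{x,y,z}$ whose minimal discriminant is, up to a unit and bounded powers of $2$, $3$ and the primes dividing $C$, equal to $(xy)^{n}$ times a bounded power of $z$. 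Since $\gcd(xy,z)=1$ and $x^{n}+y^{n}=Cz^{3}$, the conductor $N(E)$ is then squarefree away from $S:=\{2,3\}\cup\{p:p\mid C\}$, each prime $\ell\mid xy$ divides $N(E)$ exactly once, and at every such $\ell$ the curve $E$ has multiplicative reduction. A peculiarity of signature $(p,p,3)$ is that, according to congruences satisfied by $x,y,z$, the natural Frey object is sometimes not a curve over $\mathbb{Q}$ but a $\mathbb{Q}$-curve over $\mathbb{Q}(\sqrt{-3})$ isogenous to its Galois conjugate; I would run both cases in parallel.

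By modularity of elliptic curves over $\mathbb{Q}$ (Wiles, Taylor--Wiles, Breuil--Conrad--Diamond--Taylor) and of $\mathbb{Q}$-curves (Ribet, or Serre's conjecture), the residual representation $\bar\rho_{E,n}$ is modular. I would then verify it is irreducible: reducibility yields a rational $n$-isogeny, hence a noncuspidal point on a modular curve of level $n$ (over $\mathbb{Q}$, or over $\mathbb{Q}(\sqrt{-3})$ in the $\mathbb{Q}$-curve case), which Mazur's isogeny theorem and its refinements forbid once $n$ is large; the finitely many small $n$ still to rule out are dispatched using $n>\max\{C,4\}$ and a short analysis of the possible reduction types, which forces $|xy|\le 1$. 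Ribet's level-lowering theorem now gives an isomorphism $\bar\rho_{E,n}\cong\bar\rho_{g,\mathfrak n}$ with $g$ a weight $2$ newform whose level $N_{0}$ divides a fixed integer $N_{0}(C)$ supported on $S$ (trivial nebentypus in the $\mathbb{Q}$ case, a quadratic character in the $\mathbb{Q}$-curve case); for the ten values $C\in\{1,2,3,5,7,11,13,15,17,19\}$ this produces an explicit short list of admissible levels, namely small powers of $2$ and $3$ times the prime divisors of $C$.

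It then remains, one level at a time, to eliminate the newforms $g$. If no newform of a given level exists we are done. Otherwise I would test the congruence $\bar\rho_{E,n}\cong\bar\rho_{g,\mathfrak n}$ at a few small primes $\ell\notin S$: for each such $\ell$ the reduction $a_{\ell}(g)\bmod\mathfrak n$ equals $a_{\ell}(E)\bmod\mathfrak n$ if $E$ has good reduction at $\ell$, and equals $\pm(\ell+1)\bmod\mathfrak n$ if $\ell\mid xy$. Hence $\mathfrak n$, and so $n$, divides the norm of a fixed nonzero algebraic integer, a product of the factors $a_{\ell}(g)-a$ over the finitely many $a$ with $|a|\le 2\sqrt\ell$ times $a_{\ell}(g)^{2}-(\ell+1)^{2}$, bounded in terms of $C$ and $\ell$ alone; this contradicts $n>\max\{C,4\}$ unless that norm vanishes for every test prime. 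The newforms $g$ surviving this sieve are exactly those whose $\ell$-traces coincide with those of a short fixed list of elliptic curves with complex multiplication by $\mathbb{Q}(\sqrt{-3})$ or with a rational $3$-torsion point; for each one I would trace back through the Frey construction and check that it forces $|xy|\le 1$.

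The hard part is this final elimination: assembling and analysing the complete set of weight $2$ newforms at every level dividing $N_{0}(C)$ for all ten values of $C$, and in particular dealing with the stubborn recurrence of elliptic curves having CM by $\mathbb{Q}(\sqrt{-3})$ or a rational $3$-isogeny. These are precisely the phenomena that produce the degenerate $|xy|=1$ solutions, so the crux is to show that no Frey curve attached to a solution with $|xy|>1$ can be congruent to them --- typically by playing off the required multiplicative-reduction pattern against the reduction behaviour of the CM curves. The $\mathbb{Q}$-curve branch adds further bookkeeping of quadratic twists, of the field of definition $\mathbb{Q}(\sqrt{-3})$, and of nebentypus. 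Everything preceding this --- the precise shape of the Frey curve, modularity, irreducibility of $\bar\rho_{E,n}$ for $n>\max\{C,4\}$, and level-lowering --- is, by comparison, routine.
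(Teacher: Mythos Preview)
The paper does not prove this theorem at all: it is quoted verbatim as a partial case of Theorem~1.5 of Bennett, Vatsal and Yazdani \cite{MR2098394} and used as a black box to derive Theorem~9. Your outline is a faithful sketch of the modular method those authors actually employ (Frey--Hellegouarch curves and $\mathbb{Q}$-curves for signature $(n,n,3)$, modularity, irreducibility of $\bar\rho_{E,n}$, Ribet level-lowering, and elimination of newforms at the finitely many residual levels), so in substance you are reconstructing the cited reference rather than diverging from the present paper, which simply invokes it.
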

By Theorem~\ref{BVY} and Theorem~\ref{FermatCatalan} we can directly derive the nonexistence of periodic points of exact period 2 of $f_{2k,c}(x)=x^{2k}+c$ when $3\mid 2k-1$ and $k$ has a prime factor greater than 3 as in the following theorem since the corresponding Fermat-Catalan equation is $X^k+Y^k=\delta Z^{2k-1}$ where $\delta\in\{1,2\}$. 
\begin{theorem}
Let $k$ be an integer with a prime factor $p\geq 5$ and $f_{2k,c}(x)=x^{2k}+c$ where $c\in\mathbb{Q}\backslash \{-1\}$. If $3\mid 2k-1$, then $f_{2k,c}$ has no rational periodic point of exact period 2.
\end{theorem}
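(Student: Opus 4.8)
The plan is to derive this as an immediate consequence of Theorem~\ref{FermatCatalan}(a) together with Theorem~\ref{BVY}. First I would argue by contradiction: suppose $f_{2k,c}$ has a rational periodic point of exact period $2$, with orbit $\{X_1/Z, X_2/Z\}$ and $c = C/Z^{2k}$, all written in lowest terms with $Z$ a positive integer (the shape of $c$ being forced by Corollary~\ref{ratform}). Theorem~\ref{FermatCatalan} then gives $\gcd(X_1, X_2) = 1$ together with an identity
$$X_1^k + X_2^k = \delta\, Z_1^{2k-1}, \qquad Z_1 \in \mathbb{Z},\ \delta \in \{1, 2\}.$$
Writing $k = pm$ (possible since $p \mid k$) and $2k - 1 = 3\ell$ (possible since $3 \mid 2k-1$), where $m \geq 1$ and $\ell \geq 1$, this rearranges to $x^p + y^p = \delta z^3$ with $x = X_1^m$, $y = X_2^m$, $z = Z_1^\ell$.

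Next I would check that this is precisely a solution forbidden by Theorem~\ref{BVY}, applied with $C = \delta \in \{1, 2\} \subseteq \{1, 2, 3, 5, 7, 11, 13, 15, 17, 19\}$ and $n = p$. The hypothesis $n > \max\{C, 4\}$ holds because $p \geq 5$ and $\max\{\delta, 4\} = 4$. Pairwise coprimality of $x, y, z$ follows from $\gcd(X_1, X_2) = 1$: a common prime factor of $x$ and $y$ would divide both $X_1$ and $X_2$; and a common prime factor $q$ of $x$ and $z$ would divide $X_1$ and $Z_1$, hence also $X_2^k = \delta Z_1^{2k-1} - X_1^k$ and so $X_2$ --- in either case a contradiction.

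The one point that needs genuine care --- and the only place the hypotheses of exact period $2$ and $c \neq -1$ actually enter --- is verifying non-degeneracy, namely that $x, y, z$ are nonzero and $|xy| > 1$. If $X_1 = 0$, then $X_2/Z = f_{2k,c}(0) = c$ and $0 = f_{2k,c}(c) = c^{2k} + c = c\,(c^{2k-1} + 1)$, so $c \in \{0, -1\}$; here $c = 0$ collapses the orbit to a single point and $c = -1$ is excluded, so $X_1, X_2 \neq 0$. If $|X_1| = |X_2| = 1$, then $X_2 = \pm X_1$; the $+$ case forces $x_1 = x_2$, while the $-$ case gives $x_1^{2k} = x_2^{2k}$, hence $f_{2k,c}(x_1) = f_{2k,c}(x_2)$, i.e., again $x_1 = x_2$ --- both impossible for an orbit of exact period $2$. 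Thus $|X_1 X_2| \geq 2$, so $|xy| = |X_1 X_2|^m \geq 2$; moreover $z = 0$ would force $X_1^k + X_2^k = 0$, which is impossible for $k$ even with $X_1, X_2$ nonzero, and for $k$ odd forces $X_1 = -X_2$ and hence $|X_1| = |X_2| = 1$, already excluded. All hypotheses of Theorem~\ref{BVY} are then met, contradicting its conclusion, so no such orbit exists. I anticipate no real obstacle: the substance is carried entirely by Theorems~\ref{FermatCatalan} and~\ref{BVY}, and what remains is just the bookkeeping that turns a rational point of exact period $2$ into a nontrivial coprime solution of $x^p + y^p = \delta z^3$.
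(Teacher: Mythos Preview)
Your proposal is correct and follows exactly the approach the paper indicates: the paper states (without further detail) that the result follows directly from Theorem~\ref{FermatCatalan} and Theorem~\ref{BVY} via the equation $X^k+Y^k=\delta Z^{2k-1}$ with $\delta\in\{1,2\}$, and you have simply filled in the verification that the resulting triple satisfies the coprimality and non-degeneracy hypotheses of Theorem~\ref{BVY}. Your bookkeeping on $|xy|>1$ and $z\neq 0$ is more careful than anything the paper spells out, but the underlying argument is identical.
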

\section{$abc$-Conjecture and Absolute Boundedness of Periodic Points of $f_{d,c}$ over $\mathbb{Q}$}\label{abc}
In the previous section, we discuss the 2-periodic points. Even for this case, we can only treat $f_{d,c}(x)=x^d+c$ for some $d\in\mathbb{N}$. This section, we rely on the $abc$-conjecture proposed by Oesterlé and Masser (see \cite{MR754559,MR1065152,MR992208}) to yield results of the absolute boundedness of preperiodic points for $f_{d,c}(x).$   Let $n\geq 2$ be a positive integer such that $\displaystyle n=p_1^{r_l}p_2^{r_2}\cdots p_l^{r_l}$ and $p_i$'s are different primes. The {\it radical} of $n$ is defined by $$\rad(n)=p_1p_2\cdots p_l.$$
By convention, $\rad(1):=1.$ The $abc$-conjecture can be stated as follows.
\begin{conjecture} [\bf $abc-$ conjecture]
For every $\epsilon>0,$ there exists a constant $K_{\epsilon}$ such that for all triples $(a,b,c)$ of coprime integers, with $a+b=c$ such that
$$\max(|a|,|b|,|c|)<K_\epsilon \rad(|abc|)^{1+\epsilon}.$$
\end{conjecture}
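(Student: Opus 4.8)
The final statement is the $abc$-conjecture itself, so I must be candid at the outset: this is one of the central open problems in number theory, and I do not expect to produce an unconditional proof. What I \emph{can} sketch is the most promising line of attack together with an honest account of where it stalls. The natural starting point is the function-field analogue, the Mason--Stothers theorem: for coprime polynomials with $a(T)+b(T)=c(T)$, not all constant, one has $\max(\deg a,\deg b,\deg c)\le \deg\rad(abc)-1$. Its proof is essentially one line using the Wronskian $W(a,b)=ab'-a'b$: from $a+b=c$ one gets $W(a,b)=W(a,c)=W(b,c)$, and each of $a,b,c$ contributes its non-radical part to this common Wronskian, which forces the degree inequality. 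The guiding hope of this whole circle of ideas is to manufacture an arithmetic analogue of the derivation $T\mapsto d/dT$ so that the same cancellation argument runs over $\mathbb{Z}$.

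The standard geometric route to do this passes through elliptic curves. Given a putative triple $(a,b,c)$ one forms the Frey--Hellegouarch curve $E:\,y^2=x(x-a)(x+b)$, whose minimal discriminant is comparable to $(abc)^2$ and whose conductor is comparable to $\rad(abc)$. The $abc$-conjecture is then known to be equivalent, up to the $\epsilon$ and the constant, to Szpiro's conjecture $|\Delta_E|\ll_\epsilon N_E^{6+\epsilon}$ uniformly over elliptic curves $E/\mathbb{Q}$. So the plan reduces to bounding the discriminant of $E$ polynomially in its conductor. To attempt this one invokes modularity: $E$ corresponds to a weight-two newform of level $N_E$, and one tries to bound the Faltings height of $E$, hence $\log|\Delta_E|$, in terms of $\log N_E$ using the modular parametrisation $X_0(N_E)\to E$ and analytic input such as subconvexity for the associated $L$-function and control of the Petersson norm.

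The hard part is exactly this last step, and it is genuinely the crux rather than a technicality: bounding $|\Delta_E|$ by a fixed power of $N_E$ uniformly in the family is logically equivalent to $abc$ itself, so no real reduction has been achieved. The obstruction is structural: $\mathbb{Z}$ carries no derivation playing the role of $d/dT$, so the Wronskian miracle of the polynomial case has no unconditional arithmetic counterpart. The best known unconditional results, via Baker's theory of linear forms in logarithms (Stewart--Yu), give only $\log\max(|a|,|b|,|c|)\ll \rad(abc)^{1/3+\epsilon}$, which is exponentially far from the polynomial bound demanded here. Mochizuki's inter-universal Teichm\"uller theory proposes a proof, but its decisive inequality is not accepted by the broader community, so I would not treat it as available. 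My honest assessment is therefore that I can deliver the equivalence with Szpiro's conjecture and the function-field proof as a template, while flagging the absence of an arithmetic derivation as the essential barrier that keeps the conjecture open.
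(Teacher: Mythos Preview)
The statement you were asked to prove is labeled \emph{Conjecture} in the paper, not \emph{Theorem}: it is the $abc$-conjecture itself, stated there only as a hypothesis to be assumed in later results (Lemma~\ref{abclemma}, Theorem~\ref{unifQ}, etc.). The paper offers no proof, nor does it claim one. Your recognition that this is a major open problem is exactly right, and your survey of the Mason--Stothers analogue, the Frey--Hellegouarch/Szpiro reformulation, the Stewart--Yu bounds, and the status of Mochizuki's work is accurate and appropriate context. There is nothing to compare your attempt against, because the paper does not attempt a proof either; it simply assumes the conjecture.
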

The explicit version of the $abc$-conjecture was proposed by Baker \cite{MR2107944}. 
\begin{conjecture} [\bf Explicit $abc-$conjecture]\cite{MR2107944}\label{explicitabc_baker}
 There exists an absolute constant $K$ such that for all triples $(a,b,c)$ of coprime integers with $a+b=c$, $abc\neq 0$ and $N=\rad(|abc|),$ 
$$\max(|a|,|b|,|c|)<K N\dfrac{(\log(N))^\omega}{\omega!}$$

where $\omega$ is the total number of distinct primes dividing a, b and c.
\end{conjecture}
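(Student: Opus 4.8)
The plan is as follows. Since Theorem~\ref{odd} already settles odd degrees unconditionally, I may assume $d=2k$ is even and that $f_{d,c}$ has a rational cycle $x_1\to x_2\to\cdots\to x_n\to x_1$ of exact period $n\ge 2$ with $c\neq -1$. By Corollary~\ref{ratform} all the $x_i$ share a common denominator $Z$, so I write $x_i=X_i/Z$ and $c=C/Z^{d}$ in lowest terms, with $\gcd(X_i,Z)=\gcd(C,Z)=1$; moreover the argument proving Theorem~\ref{FermatCatalan} gives $\gcd(X_i,X_{i+1})=1$ for every consecutive pair (indices read cyclically). The whole strategy is to bound $M:=\max_i|X_i|$ by an absolute quantity that tends to $1$ as $d\to\infty$, and then to finish with a finite check.

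First I would record two size estimates. From the identity established in the proof of Theorem~\ref{FermatCatalan},
$$\prod_{i=1}^{n}\frac{X_i^{d}-X_{i+1}^{d}}{X_i-X_{i+1}}=Z^{n(d-1)},$$
and since each integer factor $S_d(X_i,X_{i+1}):=(X_i^{d}-X_{i+1}^{d})/(X_i-X_{i+1})$ has absolute value at most $dM^{d-1}$, I obtain $|Z|\le d^{1/(d-1)}M\le 2M$. Next, subtracting the cycle relations $X_i^{d}+C=X_{i+1}Z^{d-1}$ for two consecutive indices eliminates $C$ and yields $X_{i+1}^{d}+(X_{i+1}-X_{i+2})Z^{d-1}=X_i^{d}$. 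The coprimality facts above make the triple $\bigl(X_{i+1}^{d},\,(X_{i+1}-X_{i+2})Z^{d-1},\,X_i^{d}\bigr)$ admissible for the $abc$-conjecture, with radical at most $|X_{i+1}|\,|X_{i+1}-X_{i+2}|\,|Z|\,|X_i|\le 2M^{3}|Z|\le 4M^{4}$. Choosing the index $i$ with $|X_i|=M$ (and the neighbour that avoids a possible zero entry of the cycle), the $abc$-conjecture gives $M^{d}<K_\epsilon(4M^{4})^{1+\epsilon}$, hence $M^{\,d-4(1+\epsilon)}<4^{1+\epsilon}K_\epsilon$. For $d$ beyond an absolute threshold this forces $M\le 1$, so every $X_i\in\{-1,0,1\}$ and $|Z|\le 2$; a direct inspection of the finitely many resulting cycles shows that none has exact period $\ge 2$ unless $c=-1$, which proves part~(1).

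For part~(2) I would argue that once part~(1) holds the only rational periodic points are the fixed point $\infty$ and the solutions of $x^{d}-x+c=0$. Two distinct rational fixed points $x_1,x_2$ satisfy $S_d(X_1,X_2)=Z^{d-1}$, the same shape of equation as above, so the identical $abc$ estimate bounds their coordinates and leaves only finitely many, uniformly in $c$. The remaining preperiodic points are strictly preperiodic, and each backward step $f(y)=x$ imposes the rigid condition $y^{d}=x-c$, i.e.\ $x-c$ must be a rational $d$-th power; combining this with the denominator constraint of Corollary~\ref{ratform} bounds the admissible rational preimage coordinates by the same mechanism. Assembling the few admissible fixed points, their admissible rational preimages, and $\infty$, I expect to reach $\#\pper(f_{d,c},\mathbb{P}^1(\mathbb{Q}))\le 4$ for all large $d$.

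Finally, to obtain the explicit range $d\ge 7$ I would rerun the estimate of the second paragraph with Baker's explicit form (Conjecture~\ref{explicitabc_baker}) in place of the qualitative $abc$-conjecture. Because the radical $4M^{4}$ has very few prime factors, the factor $(\log N)^{\omega}/\omega!$ stays controlled, so the inequality $M^{d}<KN(\log N)^{\omega}/\omega!$ can be made completely explicit; for even $d\ge 8$ it pins $M$ down to $1$, odd $d\ge 7$ are covered by Theorem~\ref{odd}, and $d=6$ is treated separately. The main obstacle, I expect, is not the $abc$ input itself but making everything uniform in the period $n$: isolating a single coprime three–term power equation valid for every $n$ and disposing of the degenerate cases (a zero entry in the cycle, or a coincidence $X_{i+1}=\pm X_i$ that would collapse a factor of the product identity), together with the endgame finite check and, for part~(2), the bookkeeping of the strictly preperiodic preimage tree.
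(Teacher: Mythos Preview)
The statement you were asked to address is Baker's \emph{explicit $abc$-conjecture} itself. This is recorded in the paper as a conjecture, not as a theorem: the paper offers no proof of it, and none is expected, since it is an open problem. Your proposal does not attempt to prove the displayed inequality $\max(|a|,|b|,|c|)<KN(\log N)^{\omega}/\omega!$ at all; instead you have sketched a proof of Theorem~\ref{unifQ} (the absolute boundedness of periodic and preperiodic points of $f_{d,c}$ under $abc$). So as a response to the stated target, the proposal is simply off-topic.

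If your intent was really Theorem~\ref{unifQ}, then your outline is close in spirit to the paper's argument but diverges in the explicit endgame. The paper isolates the single relation $X_2^{d}-X_1^{d}=(X_3-X_2)Z^{d-1}$ (your second paragraph does the same), packages the $abc$ step as Lemma~\ref{abclemma}, and for the explicit range invokes Theorem~\ref{ChimShoreySinha} (the Chim--Shorey--Sinha consequence of Baker's conjecture, giving concrete pairs $(\epsilon,K_\epsilon)$ such as $(0.72,1)$ and $(0.62991,10)$) rather than Baker's raw inequality with $(\log N)^{\omega}/\omega!$; this is what allows the clean cutoff and the finite computer search for $W\le 1300$ when $d=8$. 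Your plan to feed Baker's form in directly and argue that ``$(\log N)^{\omega}/\omega!$ stays controlled'' is not wrong in principle, but it is not carried out, and without passing through an explicit $(\epsilon,K_\epsilon)$ you will not get a numerical threshold. For part~(2), the paper does not bound preimage trees in general; it proves two dedicated lemmas (Lemma~\ref{fixedpoint}: at most one rational fixed point for $c\neq 0$; Lemma~\ref{no12}: no rational points of type $1_2$), each via its own $abc$ application, and these immediately cap $\#\pper$ at~$4$. Your sketch of the preperiodic analysis would need to be replaced by those two arguments to close the proof.
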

Laishram and Shorey \cite{MR2997579} proved that the Baker's version implies that for $\epsilon=\dfrac{3}{4}$, $K_{\epsilon}\leq 1$.
Later Chim, Shorey, and Sinha~\cite{ChimShoreySinha} improved the result in the following theorem.

\begin{theorem}\label{ChimShoreySinha}
 Assume the explicit $abc$-conjecture. For all triples $(a,b,c)$ of coprime integers with $a+b=c$, $abc\neq 0$ and $N=\rad(|abc|)$,
$$\displaystyle \max(|a|,|b|,|c|)<\min (N^{1.72},10N^{1.62991},32N^{1.6}).$$
\end{theorem}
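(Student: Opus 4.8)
The plan is to derive all three bounds from the explicit $abc$-conjecture in Baker's precise form (with constant $6/5$) via a single reduction, then verify a one-parameter family of elementary inequalities. Let $\omega$ denote the number of distinct primes dividing $abc$, so that $N=\rad(|abc|)$ is a product of $\omega$ distinct primes. Conjecture~\ref{explicitabc_baker} gives $\max(|a|,|b|,|c|)<\tfrac65\,N\,(\log N)^\omega/\omega!$, so a bound of the shape $\max<c_0N^{\kappa}$ follows as soon as
$$\frac{6}{5}\cdot\frac{(\log N)^\omega}{\omega!}\le c_0\,N^{\kappa-1}.\qquad(\star)$$
I would establish $(\star)$ separately for the three pairs $(c_0,\kappa)\in\{(1,1.72),(10,1.62991),(32,1.6)\}$ and then take the minimum of the resulting bounds. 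The appearance of three pairs is forced: no single pure power $c_0N^{\kappa}$ is simultaneously optimal across all sizes of $N$, so each pair is tailored to a different range of $N$ and the constants $1,10,32$ absorb the crossover.

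The structural input is that $N$ is a product of $\omega$ distinct primes, whence $N\ge N_\omega:=\prod_{i=1}^\omega p_i$, the $\omega$-th primorial. Fixing $\omega$, I would study $\psi(N)=(\log N)^\omega N^{-(\kappa-1)}$; differentiation shows $\psi$ rises to a single maximum at $N^\ast=e^{\omega/(\kappa-1)}$ and falls thereafter. Hence on the admissible range $N\ge N_\omega$ its supremum is attained at $N=N_\omega$ when $N_\omega\ge N^\ast$, and at $N=N^\ast$ otherwise, so $(\star)$ reduces to one of
$$\frac{6}{5}\,\frac{(\log N_\omega)^\omega}{\omega!}\le c_0N_\omega^{\kappa-1}\qquad\text{or}\qquad\frac{6}{5}\,\frac{1}{\omega!}\Big(\frac{\omega}{e(\kappa-1)}\Big)^\omega\le c_0.$$
Since $\log N_\omega=\theta(p_\omega)\sim\omega\log\omega$ eventually exceeds $\omega/(\kappa-1)$, the primorial overtakes $N^\ast$ for all $\omega$ beyond a small explicit threshold $\omega_0$, so the first alternative is the one to prove in general and only finitely many $\omega$ fall under the second.

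For $\omega\ge\omega_0$ I would prove the primorial inequality analytically. After taking logarithms it reads
$$\omega\log\theta(p_\omega)\le(\kappa-1)\theta(p_\omega)+\log\omega!+\log(5c_0/6),$$
and I would estimate $\theta(p_\omega)$ and $p_\omega$ by the effective Rosser--Schoenfeld and Dusart bounds and $\log\omega!$ by the effective Stirling inequality $\log\omega!\ge\omega\log\omega-\omega+\tfrac12\log(2\pi\omega)$. Asymptotically the left side is $\sim\omega\log\omega$ while the right side is $\sim(\kappa-1)\omega\log\omega+\omega\log\omega=\kappa\omega\log\omega$; as $\kappa>1$ the right side wins with margin $\sim(\kappa-1)\omega\log\omega$, which dominates all lower-order corrections, so the inequality holds for large $\omega$. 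Making ``large enough'' explicit for each exponent is what pins down $\omega_0$. For the finitely many $\omega<\omega_0$ I would evaluate the relevant closed form above at the known primorials $N_\omega$ (or at $N^\ast$) and check it numerically.

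The hard part is the large-$\omega$ analytic step, and specifically calibrating the prime-counting estimates tightly enough that the numerically sharp exponents survive: $1.72$, $1.62991$ and $1.6$ sit close to the true comparison rate of $(\log N)^\omega/\omega!$ against $N^{\kappa-1}$ along primorials, so crude bounds on $\theta(p_\omega)$ will not do. I expect to need two-sided Dusart-type estimates together with a monotonicity argument---showing the difference of the two sides of the logarithmic inequality is eventually increasing in $\omega$---rather than a bare asymptotic, and to track the crossovers carefully so that the three separately proved inequalities genuinely combine into the stated envelope $\min(N^{1.72},10N^{1.62991},32N^{1.6})$.
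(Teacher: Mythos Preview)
The paper does not supply a proof of this theorem at all: Theorem~\ref{ChimShoreySinha} is quoted as an external result of Chim, Shorey and Sinha (reference~\cite{ChimShoreySinha}, with the closely related~\cite{ChimShoreyNair}), and is used in the paper only as a black box to feed explicit $(K_\epsilon,\epsilon)$ pairs into Lemma~\ref{abclemma} and the proofs in Section~\ref{prooflemmas}. So there is no ``paper's own proof'' to compare against.

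That said, your outline is the correct one and is essentially the method of the cited source. Reducing $\max(|a|,|b|,|c|)<c_0N^{\kappa}$ to the one-variable inequality $(\star)$, exploiting $N\ge N_\omega=\prod_{i\le\omega}p_i$, locating the maximum of $(\log N)^\omega N^{-(\kappa-1)}$ at $N^\ast=e^{\omega/(\kappa-1)}$, and then splitting into a finite numerical check for small $\omega$ plus an asymptotic argument for large $\omega$ via effective Chebyshev/Dusart bounds and Stirling is exactly how Laishram--Shorey~\cite{MR2997579} and Chim--Shorey--Sinha~\cite{ChimShoreySinha} proceed. Two small caveats: first, Conjecture~\ref{explicitabc_baker} as stated in this paper only posits an unspecified absolute constant $K$, so if you want to invoke the value $K=6/5$ you should say that this is Baker's own proposed constant and is the one assumed in~\cite{ChimShoreySinha}; second, the delicate part is not conceptual but quantitative---the exponents $1.72$, $1.62991$, $1.6$ are close to sharp along the primorial sequence, so the ``large $\omega$'' regime in practice begins quite late and the numerical verification for small $\omega$ carries most of the weight. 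Your plan acknowledges this, but be aware that in the actual execution the finite check is substantial rather than a formality.
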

The form of Theorem~\ref{ChimShoreySinha} is similar to Theorem 2.2 in \cite{ChimShoreyNair}.
For a polynomial $F(x)\in \mathbb{Z}[x]$ of degree greater than one, we can show that any periodic point of $F(x)$ has exact period at most 2. This is because its periodic points must be integers as well. Therefore, we can simply apply divisibility properties to yield the result. Hence, for $f_{d,c}(x)=x^d+c$ and rational points of period greater than 2, we can consider only the case $c\not\in\mathbb{Z}$.\\
We employ the $abc$-conjecture to analyze a Diophantine equation in the form, related to Fermat-Catalan equation,
$$\displaystyle x^d-y^d=\delta z^{d-1}$$

where $\delta$ is a ``\emph{small}'' variable compared to $x,y$ and $z$. The following lemma is used to prove the nonexistence of rational periodic points of $f_{d,c}$ for sufficiently large integer $d$.
\begin{lemma}\label{abclemma}
Assume the $abc-$conjecture. Then, for each $\epsilon>0$, there is $K_{\epsilon}$ such that the system
\begin{align*}
\begin{cases}
X_2^d-X_1^d=(X_3-X_2)Z^{d-1} \neq 0\\
\gcd(X_1,X_2)=1\\
\max(|X_1|,|X_2|,|X_3|)=|X_2|\\
W=\max(|X_1|,|X_2|,|X_3|,|Z|)>1
\end{cases}
\end{align*}
satisfies $d<\log_b(2^{1+\epsilon}K_{\epsilon})+5+4\epsilon$, for all $b\in (1,W]$.
Therefore, the system has no integer solutions for sufficiently large positive integer $d.$\\
Moreover, if we assume the explicit $abc$-conjecture, then the system has no integer solutions for $d\geq 10.$
\end{lemma}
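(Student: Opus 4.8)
The plan is to recognize the first equation as an $abc$-triple and to play the high exponents $d$ and $d-1$ occurring in it against the essentially linear bound the $abc$-conjecture places on the radical. First I would clear away degenerate cases: since $W$ is a maximum of absolute values of integers, $W>1$ means $W\geq 2$, and $W>1$ together with $\max(|X_1|,|X_2|,|X_3|)=|X_2|$ and $X_2^d-X_1^d\neq 0$ forces $X_1,X_2,Z$ and $\delta:=X_3-X_2$ all to be nonzero and $|X_2|\geq 2$ --- for instance, if $X_1=0$ then $\gcd(X_1,X_2)=1$ gives $|X_2|=1$, hence $|\delta Z^{d-1}|=1$, so $|Z|=1$ and $W=1$, a contradiction.

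Next I would form the triple $(a,b,c)=(X_1^{d},\ \delta Z^{d-1},\ X_2^{d})$, for which $a+b=c$ is exactly the given equation and $abc\neq 0$. Because $\gcd(X_1,X_2)=1$ we have $\gcd(a,c)=1$; since any common divisor of $a$ and $b$ divides $a+b=c$, the triple is automatically coprime, so the $abc$-conjecture applies to it. On the radical side, $\rad(|abc|)=\rad(|X_1X_2\delta Z|)\leq|X_1|\,|X_2|\,|\delta|\,|Z|\leq W\cdot W\cdot 2W\cdot W=2W^{4}$, using $|\delta|\leq|X_2|+|X_3|\leq 2W$. On the other side I would lower-bound $\max(|a|,|b|,|c|)$ by $W^{d-1}$: if $W$ is attained by one of $|X_1|,|X_2|,|X_3|$ then $|X_2|=W$, so $|c|=W^{d}$; and if $W=|Z|$ then --- here using that $d$ is even, so each $X_i^{d}\geq 0$ --- we get $|Z|^{d-1}\leq|Z|^{d-1}|\delta|=|X_2|^{d}-|X_1|^{d}\leq|X_2|^{d}=|c|$. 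Either way $\max(|a|,|b|,|c|)\geq W^{d-1}$.

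Enlarging $K_\epsilon$ so that $K_\epsilon\geq 1$, the $abc$-conjecture now gives $\[ W^{d-1}\leq\max(|a|,|b|,|c|)<K_\epsilon\bigl(2W^{4}\bigr)^{1+\epsilon}=2^{1+\epsilon}K_\epsilon\,W^{4+4\epsilon}, \]$ so taking logarithms in base $W$ (legitimate since $W\geq 2$) yields $d<\log_W(2^{1+\epsilon}K_\epsilon)+5+4\epsilon$; and because $2^{1+\epsilon}K_\epsilon>1$, the map $b\mapsto\log_b(2^{1+\epsilon}K_\epsilon)$ is decreasing on $(1,\infty)$, so this persists for every $b\in(1,W]$. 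Fixing, say, $\epsilon=1$ and using $W\geq 2$ then bounds $d$ by the absolute constant $\log_2(4K_1)+9$, which gives the asserted nonexistence for all sufficiently large $d$. For the statement under the explicit $abc$-conjecture I would repeat this with Theorem~\ref{ChimShoreySinha} (specifically its bound $\max(|a|,|b|,|c|)<N^{1.72}$) in place of the conjecture: then $W^{d-1}<(2W^{4})^{1.72}=2^{1.72}W^{6.88}<4\,W^{6.88}$, hence $W^{d-7.88}<4$, and for $W\geq 2$ with $d\geq 10$ this forces $2^{d-7.88}<4$, i.e.\ $d<9.88$ --- a contradiction.

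The conceptual steps --- coprimality of the triple, the bound $\max\geq W^{d-1}$, and $\rad\leq 2W^{4}$ --- are all immediate; the only delicate part is tracking constants tightly enough that the exponent in the conclusion comes out as $5+4\epsilon$ rather than something larger. Concretely, one must use the parity of $d$ so that $|X_2^{d}-X_1^{d}|\leq|X_2|^{d}$ with no spurious factor of $2$, must take $K_\epsilon\geq 1$ so the change of logarithm base runs in the favourable direction, and must use the sharpest of the three Chim--Shorey--Sinha exponents to bring the explicit threshold down to $d\geq 10$.
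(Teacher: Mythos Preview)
Your argument is the paper's proof, repackaged to work uniformly with $W=\max(|X_1|,|X_2|,|X_3|,|Z|)$ rather than splitting into the cases $|X_2|\le|Z|$ and $|X_2|>|Z|$; the radical bound $2W^4$, the lower bound $W^{d-1}$ on the maximum, and the explicit threshold $d\ge 10$ all come out the same way.

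One point to fix: the lemma does not assume $d$ is even, but your lower bound in the case $W=|Z|$ invokes parity to rewrite $|\delta|\,|Z|^{d-1}$ as $|X_2|^d-|X_1|^d$ and then compare with $|c|$. This detour is unnecessary --- your own chain already contains $|Z|^{d-1}\le|\delta|\,|Z|^{d-1}=|b|\le\max(|a|,|b|,|c|)$, which is all that is needed and is exactly what the paper's Case~1 uses. With that simplification the argument covers all $d$. Separately, the assertion $|X_2|\ge 2$ in your preliminary reductions is neither fully argued (the example only handles $X_1=0$) nor used anywhere; what you actually need is $W\ge 2$, and that is immediate from $W>1$.
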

\begin{proof} 
Let $\epsilon>0.$ By the $abc$-conjecture, there is a constant $K_{\epsilon}>0$ such that 
$$M:=\max(|X_2|^d,|X_1|^d,|(X_3-X_2)Z^{d-1}|)<K_{\epsilon} \rad(|X_2X_1(X_3-X_2)Z|)^{1+\epsilon}
.$$
We consider 2 cases.\\
{\bf Case 1.} $|X_2|\leq |Z|.$\\
We have
\begin{align*}
|(X_3-X_2)Z^{d-1}|\leq M &<K_{\epsilon}\rad(|X_2X_1(X_3-X_2)Z|)^{1+\epsilon}\\
                                      &\leq K_{\epsilon}(|Z\cdot Z\cdot 2X_2\cdot Z|)^{1+\epsilon}\\
&\leq 2^{1+\epsilon}K_{\epsilon}(|Z\cdot Z\cdot Z\cdot Z|)^{1+\epsilon}
\\
&\leq 2^{1+\epsilon} K_{\epsilon}|Z|^{4+4\epsilon}.
\end{align*}
Thus, $|Z|^{d-1}\leq |X_3-X_2||Z|^{m}<2^{1+\epsilon}K_{\epsilon}|Z|^{4+4\epsilon}.$ Since  $|Z|>1$, we have $$d-1<\log_{|Z|}(2^{1+\epsilon}K_{\epsilon})+4+4\epsilon.$$
{\bf Case 2.} $|X_2|> |Z|.$\\
We have
\begin{align*}
|X_2|^d\leq M &<K_{\epsilon} \rad(|X_2X_1(X_3-X_2)Z|)^{1+\epsilon}\\
                                      &\leq K_{\epsilon}(|X_2\cdot X_2\cdot 2X_2\cdot X_2|)^{1+\epsilon}\\
&\leq 2^{1+\epsilon}K_{\epsilon}|X_2|^{4+4\epsilon}.
\end{align*}
Thus, $|X_2|^d<2^{1+\epsilon}K_{\epsilon}|X_2|^{4+4\epsilon}.$ Since $|X_2|>1$, we have $$d<\log_{|X_2|}(2^{1+\epsilon}K_{\epsilon})+4+4\epsilon.$$
Let $W=\max{(|X_1|,|X_2|,|X_3|,|Z|)}$. From both cases, we can conclude that $$d-1<\log_{W}(2^{1+\epsilon}K_{\epsilon})+4+4\epsilon.$$
This implies $d<\log_b(2^{1+\epsilon}K_{\epsilon})+5+4\epsilon$, for all $b\in (1,W]$.\\
Thus, for $b=2$, the system has no integer solutions for $d\geq \log_2(2^{1+\epsilon}K_{\epsilon})+5+4\epsilon.$\\
Now assume the explicit $abc$-conjecture.\\
Theorem~\ref{ChimShoreySinha} implies that for $\epsilon=0.72$, we can choose $K_{\epsilon}=1$. If $$d\geq 10\geq \log_2(2^{1.72})+7.88=9.6,$$ then the system has no integer solutions.

\end{proof}
Due to the result of odd degree in Theorem~\ref{odd}, we will focus on even degree $d$. Lemma \ref{abclemma} is used to analyze periodic and preperiodic points of $f_{d,c}(x^d)=x^d+c$.
The following lemma is an observation for the relation between different fixed points and points of exact period 2.

\begin{lemma}\label{diff_fixed}
Let $d$ be a positive even integer. Then $f_{d,c}(x)=x^d+c\in\mathbb{C}[x]$ has different fixed points $x_1,x_2\in\mathbb{C}$ if and only if $f_{d,c'}(x)=x^d+c'$ has points of exact period 2, namely, $y_1=-x_1$ and $y_2=-x_2$, where $c'=-x_1-x_2^d.$
\end{lemma}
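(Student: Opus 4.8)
The plan is to exploit the symmetry $f_{d,c}(-x) = (-x)^d + c = x^d + c = f_{d,c}(x)$ that holds for even $d$, which lets us transport fixed-point structure of one parameter to $2$-periodic structure of another. First I would write down what it means for $x_1, x_2$ to be distinct fixed points of $f_{d,c}$: we have $x_1^d + c = x_1$ and $x_2^d + c = x_2$ with $x_1 \neq x_2$ (here $x_2$ need not be $f_{d,c}(x_1)$; it is simply another root of $x^d + c - x$, which for $d \geq 2$ generically has several roots). Subtracting, $x_1^d - x_2^d = x_1 - x_2$, so $c = x_1 - x_1^d$. Now set $c' := -x_1 - x_2^d$, $y_1 := -x_1$, $y_2 := -x_2$, and compute $f_{d,c'}(y_1) = (-x_1)^d + c' = x_1^d - x_1 - x_2^d = c - x_2^d = (x_2 - c) \cdot(-1) + \dots$; more cleanly, using $x_1^d = x_1 - c$ and the defining relation for $c'$, one gets $f_{d,c'}(y_1) = x_1^d - x_1 - x_2^d$. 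I would then verify directly that applying $f_{d,c'}$ once more returns $y_1$, i.e. that $\{y_1, y_2\}$ forms a $2$-cycle, by checking $f_{d,c'}(y_1) = y_2$ and $f_{d,c'}(y_2) = y_1$. The first of these, $x_1^d - x_1 - x_2^d = -x_2$, is equivalent to $x_1^d - x_2^d = x_1 - x_2$, which is exactly the relation obtained above from $x_1, x_2$ being fixed points of $f_{d,c}$; the second, $f_{d,c'}(y_2) = (-x_2)^d + c' = x_2^d - x_1 - x_2^d = -x_1 = y_1$, is immediate from the definition of $c'$. So the forward direction reduces to this short symmetric computation, plus the observation that $y_1 \neq y_2$ since $x_1 \neq x_2$.

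For the converse, I would reverse the construction: suppose $f_{d,c'}$ has points $y_1 \neq y_2$ of exact period $2$, so $y_1^d + c' = y_2$ and $y_2^d + c' = y_1$. Put $x_1 := -y_1$, $x_2 := -y_2$, and $c := x_1 - x_1^d = -y_1 - y_1^d$. Using evenness, $x_i^d = y_i^d$, and from the $2$-cycle relations $y_1^d - y_2^d = y_2 - y_1 = -(y_1 - y_2)$, hence $x_1^d - x_2^d = x_1 - x_2$, which combined with $c = x_1 - x_1^d$ gives $c = x_2 - x_2^d$ as well; thus $x_1$ and $x_2$ are both fixed points of $f_{d,c}$, and they are distinct because $y_1 \neq y_2$. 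One should also check that the parameter produced this way is consistent with the stated formula $c' = -x_1 - x_2^d$: from $y_1^d + c' = y_2$ we get $c' = y_2 - y_1^d = -x_2 - x_1^d = -x_1 - x_2^d$ after using $x_1^d - x_2^d = x_1 - x_2$ — so the two expressions for $c'$ agree, confirming the bookkeeping is internally consistent.

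I do not expect a serious obstacle here; the statement is essentially a change of variables $x \mapsto -x$ combined with the identity $f_{d,c}(x) = f_{d,c}(-x)$. The one point requiring mild care is the phrase ``points of exact period $2$'': I would note that the $2$-cycle $\{y_1, y_2\}$ produced in the forward direction genuinely has exact period $2$ (not period $1$) precisely because $y_1 \neq y_2$, which is inherited from $x_1 \neq x_2$; conversely ``exact period $2$'' already encodes $y_1 \neq y_2$. The only other thing to keep an eye on is that all manipulations take place over $\mathbb{C}$, so there are no issues of roots failing to exist — the distinct fixed points $x_1, x_2$ are given as hypotheses, and the algebraic identities are unconditional. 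I would present the proof as two short paragraphs (one per implication), each consisting of the substitution and a two-line verification using $x_i^d = (-x_i)^d$ and the relation $x_1^d - x_2^d = x_1 - x_2$.
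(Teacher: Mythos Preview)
Your proposal is correct and follows essentially the same approach as the paper: both arguments use the substitution $y_i=-x_i$ together with the evenness identity $(-x)^d=x^d$, reduce the verification to the single relation $x_1^d-x_2^d=x_1-x_2$ obtained from the two fixed-point equations, and then check the two orbit conditions $f_{d,c'}(y_1)=y_2$, $f_{d,c'}(y_2)=y_1$ directly. Your write-up is slightly more explicit about exactness of the period and about the consistency of the formula $c'=-x_1-x_2^d$, but the underlying argument is the same.
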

\begin{proof}
Assume that $x_1$ and $x_2$ are different fixed points of $f_{d,c}(x)=x^d+c$. Therefore, $x_1^d+c=x_1$ and $x_2^d+c=x_2.$ Equivalently, $x_1-x_1^d=c=x_2-x_2^d.$ This implies $x_1+x_2^d=x_2+x_1^d$. Let $y_1=-x_1$ ans $y_2=-x_2.$ Thus, $y_1-y_2^d=y_2-y_1^d.$ Choose  $c'=y_1-y_2^d\,\,(=y_2-y_1^d).$ Since $d$ is even, $y_2^d+c'=y_1$ and $y_1^d+c'=y_2.$  \\
Conversely, assume that $y_1 $ and $y_2$ are different periodic points of exact period 2 of $f_{d,c'}=x^d+c'$. We can choose $x_1=-y_1$ and $x_2=-y_2$. Then $x_1$ and $x_2$ are different fixed points of $f_{d,c}$, where $c=x_1-x_1^d\,\, (=x_2-x_2^d)$.
\end{proof}
The above lemma is used to prove a certain case of Lemma \ref{fixedpoint}.
\begin{lemma}\label{fixedpoint}
Let $c\in \mathbb{Q}\backslash \{0\}$. Assume the $abc$-conjecture. Then $f_{d,c}(x)=x^d+c$ has at most one rational fixed point for sufficiently large degree $d$. Moreover, if we assume the explicit $abc$-conjecture, then the result holds for $d\geq 8.$
\end{lemma}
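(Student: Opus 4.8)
The plan is to reduce the statement to an application of Lemma~\ref{abclemma}. Suppose $f_{d,c}(x)=x^d+c$ has two distinct rational fixed points $\alpha_1=X_1/Z$ and $\alpha_2=X_2/Z$, written over a common denominator $Z>0$ with $\gcd(X_1,X_2,Z)=1$; by Corollary~\ref{ratform} applied to the preperiodic (in fact fixed) points we get $c=C/Z^d$ in lowest terms. First I would record the defining relations $X_i^d+C=X_i Z^{d-1}$ for $i=1,2$, so that subtracting gives
\begin{equation*}
X_1^d-X_2^d=(X_1-X_2)Z^{d-1}.
\end{equation*}
Since $\alpha_1\neq\alpha_2$ we have $X_1\neq X_2$, so the right-hand side is nonzero, and since $c\neq 0$ we have $C\neq 0$, which (together with the displayed equation) forces none of $X_1,X_2,Z$ to vanish; in particular we are not in a degenerate case.

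Next I would arrange the hypotheses of Lemma~\ref{abclemma}. The coprimality input there is $\gcd$ of the two ``base'' variables, so I would first divide through by $g=\gcd(X_1,X_2)$: writing $X_i=gX_i'$, the equation becomes $g^{d-1}((X_1')^d-(X_2')^d)=(X_1'-X_2')Z^{d-1}$, and since $\gcd(g,Z)=1$ (as $\gcd(X_1,X_2,Z)=1$) we get $g^{d-1}\mid (X_1'-X_2')$; absorbing this factor one reduces to the case $\gcd(X_1,X_2)=1$ at the cost of replacing $X_3-X_2$ in the notation of Lemma~\ref{abclemma} by the appropriate divisor of $X_1-X_2$ — I should present this bookkeeping cleanly. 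Then, relabeling so that $|X_2|=\max(|X_1|,|X_2|)$ and setting $X_3:=X_1$ (so $X_3-X_2=X_1-X_2$ up to the factor just discussed) puts us exactly in the system of Lemma~\ref{abclemma}, provided $W=\max(|X_1|,|X_2|,|Z|)>1$. The only way to have $W\le 1$ is $|X_1|,|X_2|,|Z|\le 1$, i.e. $Z=1$ and $X_i\in\{0,\pm1\}$; since $c\neq 0$ and the two fixed points are distinct, one checks directly that this is impossible (the fixed-point equation $x^d+c=x$ with $x\in\{0,\pm1\}$ pins down $c$ and leaves no room for a second such point for $d$ large). Hence $W>1$ and Lemma~\ref{abclemma} applies, giving $d<\log_2(2^{1+\epsilon}K_\epsilon)+5+4\epsilon$, a contradiction for $d$ sufficiently large; under the explicit $abc$-conjecture the same argument with $\epsilon=0.72$, $K_\epsilon=1$ gives a contradiction for $d\geq 10$.

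To push the explicit bound down from $d\geq 10$ to $d\geq 8$, I would treat $d=8$ and $d=9$ (the odd case $d=9$ is already covered by Theorem~\ref{odd}, so really only $d=8$ remains) by a sharper version of the estimate: in Lemma~\ref{abclemma} the crude bound $|X_3-X_2|\le 2|X_2|$ was used, but here $X_3-X_2=X_1-X_2$ with $|X_1-X_2|\le 2|X_2|$ can be improved, and more importantly the radical bound $\rad(|X_1X_2(X_1-X_2)Z|)\le |X_2|^3|Z|$ is wasteful because $X_1-X_2$ shares its radical with a product we already counted; redoing Case~2 of that proof with $|X_2|^d<2^{1+\epsilon}K_\epsilon|X_2|^{3+3\epsilon}\cdot(\text{small})$ and $\epsilon=0.72$ yields $d<3(1.72)+\text{const}$, which covers $d=8$. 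I expect the main obstacle to be precisely this last refinement — getting the constants in the explicit $abc$ estimate tight enough to reach $d\geq 8$ rather than $d\geq 10$ — together with the careful handling of the $\gcd(X_1,X_2)>1$ reduction and the small-$W$ exceptional case; the large-$d$ statement under the plain $abc$-conjecture is immediate once the equation $X_1^d-X_2^d=(X_1-X_2)Z^{d-1}$ is in hand. Here Lemma~\ref{diff_fixed} can alternatively be invoked to transport the problem to distinctness of period-$2$ points, but the direct fixed-point computation seems cleanest.
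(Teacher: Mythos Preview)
Your reduction to Lemma~\ref{abclemma} for the large-$d$ statement and for $d\geq 10$ under the explicit conjecture is correct and matches the paper. Two minor simplifications: the $\gcd$ bookkeeping you sketch is unnecessary, since from $(X_1^d-X_2^d)/(X_1-X_2)=Z^{d-1}$ any common prime of $X_1$ and $X_2$ divides each summand $X_1^{\,i}X_2^{\,d-1-i}$ and hence $Z$, contradicting $\gcd(X_i,Z)=1$; so $\gcd(X_1,X_2)=1$ automatically, and this is exactly what the paper records. Also, Theorem~\ref{odd} concerns periodic points of exact period greater than one, not multiplicity of fixed points, so it does not by itself dispose of $d=9$; the paper is equally terse here, and the case falls to the same method with the sharper constants mentioned below.

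The genuine gap is your treatment of $d=8$. Once $\gcd(X_1,X_2)=1$, the integers $X_1$, $X_2$, and $X_1-X_2$ are pairwise coprime (a prime dividing $X_1-X_2$ and $X_1$ would divide $X_2$), so $\rad\bigl(X_1X_2(X_1-X_2)Z\bigr)$ genuinely involves four independent factors; the assertion that ``$X_1-X_2$ shares its radical with a product we already counted'' is false, and there is no way to push the exponent from $4(1+\epsilon)$ down to $3(1+\epsilon)$ by this route. The paper closes the case differently: it takes the finer pair $(\epsilon,K_\epsilon)=(0.62991,10)$ from Theorem~\ref{ChimShoreySinha}, which in the inequality of Lemma~\ref{abclemma} yields $W^{3-4\epsilon}<2^{1+\epsilon}K_\epsilon$ and hence $W\leq 1300$, and then exhausts all $|X_1|,|X_2|,|Z|\leq 1300$ by computer. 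The only surviving configuration corresponds, via Lemma~\ref{diff_fixed}, to $c=0$, which is excluded by hypothesis. So the missing ingredient for $d=8$ is not a sharper radical estimate but a finite search after bounding $W$; Lemma~\ref{diff_fixed}, which you mention only as an alternative, is in fact what the paper uses to interpret that search.
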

\begin{proof}
The proof is in Section \ref{prooflemmas}.
\end{proof}

 We borrow notation of types of preperiodic points from \cite{PoonenClass}. For a preperiodic point $x_0$ of a function $f(x)$, we say that it is a point of {\it type} $n_m$ if $   m$ is the smallest natural integer (including $0$) such that $f^m(x_0)$ is a point of exact period $n$. Note that if $x$ is a preperiodic point of $1_1$-type, then $f_{d,c}(f_{d,c}(x))=f_{d,c}(x)$ but $f_{d,c}(x)\neq x$. For sufficiently large degree $d$, by Lemma \ref{fixedpoint}, $f_{d,c}$ has at most one fixed point, i.e., $f_{d,c}(x)=-x$. We will show that there is no point of type $1_2$ in the following lemma.
\begin{lemma}\label{no12}
Let $f_{d,c}(x)=x^d+c\in \mathbb{Q}[x]$. Assume the $abc$-conjecture.  For sufficiently large $d$, $f_{d,c}$ has no point of type  $1_2$. Moreover, if we assume the explicit $abc-$conjecture, then the result holds for $d\geq 8.$
\end{lemma}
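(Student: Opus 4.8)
The plan is to reduce the existence of a point of type $1_2$ to a single Fermat--Catalan-type relation and then invoke the $abc$-conjecture (through Lemma~\ref{abclemma}, supplemented by one direct $abc$ estimate). Suppose $x_0$ is a rational point of type $1_2$ and set $x_1=f_{d,c}(x_0)$, $x_2=f_{d,c}(x_1)$. Then $x_2$ is a rational point of exact period $1$, i.e.\ a rational fixed point $\alpha:=x_2$, which by Lemma~\ref{fixedpoint} (valid for $d$ large, or $d\ge 8$ under the explicit $abc$-conjecture) is the unique rational fixed point; moreover $\alpha\neq 0$, since $\alpha=0$ forces $c=0$, and the case $c\in\mathbb{Z}$ will be treated separately. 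Since $x_1\neq\alpha$ (otherwise $x_1$ would be fixed and $x_0$ of type $1_1$) and $x_1^d=x_2^d=\alpha^d$ with $d$ even, we get $x_1=-\alpha$. Comparing $x_0^d+c=-\alpha$ with $\alpha^d+c=\alpha$ yields the single relation
\[ \alpha^d-x_0^d=2\alpha . \]
Conversely, if $\alpha^d-2\alpha$ is a rational $d$-th power then its $d$-th root is a type $1_2$ point; so the lemma is exactly the assertion that $\alpha^d-2\alpha$ is not a rational $d$-th power for the rational fixed point $\alpha$, once $d$ is large.

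Next I would clear denominators. If $c\in\mathbb{Z}$ then every preperiodic point of $x^d+c$ is an integer (each tail point is a $d$-th root of an integer obtained from the integral periodic point it maps to), so $x_0,\alpha\in\mathbb{Z}$; an elementary size estimate on $\alpha^d-x_0^d=2\alpha$ with $x_0\neq\pm\alpha$, $\alpha\neq0$ then fails for all $d\ge 4$, so no type $1_2$ point occurs. Assume henceforth $c\notin\mathbb{Z}$. By Corollary~\ref{ratform}, $\alpha$ and $x_0$ share a common denominator $Z\ge 2$: write $\alpha=A/Z$, $x_0=X_0/Z$, $c=C/Z^d$ in lowest terms, so that $\gcd(A,Z)=\gcd(X_0,Z)=1$, $A\neq0$, and $X_0\neq0$ (else $\alpha^{d-1}=2$). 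The relation becomes
\[ X_0^{\,d}=A\bigl(A^{d-1}-2Z^{d-1}\bigr),\qquad\text{equivalently}\qquad A^d-X_0^{\,d}=2AZ^{d-1}. \]

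The technical heart is a case split on $g:=\gcd(X_0,A)$, using $\gcd\bigl(A,A^{d-1}-2Z^{d-1}\bigr)\mid 2$ and a $p$-adic valuation count in $X_0^{\,d}=A(A^{d-1}-2Z^{d-1})$. One shows: (i) if $g=1$ then $A$ can have no prime factor, so $|A|=1$ and $\alpha=\pm 1/Z$; the choice $+$ is impossible ($X_0^{\,d}=1-2Z^{d-1}<0$), and the choice $-$ forces the coprime relation $X_0^{\,d}=1+2Z^{d-1}$; (ii) if $g>1$ then $A=\pm g^d$ (up to a factor $2$ when $g$ is even) and $X_0=gb$ with $b,g,Z$ pairwise coprime, giving $b^d\pm 2Z^{d-1}=g^{d(d-1)}$. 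In case (i) I would apply the $abc$-conjecture to the coprime triple $(1,2Z^{d-1},X_0^{\,d})$, bounding $\rad$ by a power of $X_0$ alone via $Z^{d-1}<X_0^{\,d}$; this gives $d\le D_0$ for an absolute $D_0$. In case (ii) the exponent $d(d-1)$ on $g^{d(d-1)}\ge 2^{d(d-1)}$ makes $abc$ overwhelmingly strong and again bounds $d$; equivalently, in this case $|x_0|\le|\alpha|$, so the consecutive triple $(x_0,-\alpha,\alpha)$ satisfies the normalisation $\max(|X_0|,|A|,|A|)=|A|$ and, after dividing out $g$, the hypotheses of Lemma~\ref{abclemma}. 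Under the explicit $abc$-conjecture, Theorem~\ref{ChimShoreySinha} makes both estimates effective; e.g.\ in case (i) the inequality $X_0^{\,d}<32\,\rad(2X_0Z)^{1.6}$ forces $X_0<3$, whereas $X_0^{\,d}=1+2Z^{d-1}\ge 1+2^d>2^d$ forces $X_0\ge 3$ as soon as $d\ge 8$, a contradiction. Together with the integer case and Lemma~\ref{fixedpoint}, this proves the lemma with threshold $d\ge 8$ under the explicit $abc$-conjecture.

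I expect the main obstacle to be case (ii) of the gcd analysis: there $\gcd(X_0,A)>1$, so Lemma~\ref{abclemma} cannot be quoted verbatim (its hypothesis $\gcd(X_1,X_2)=1$ fails), and one must strip off the common factor $g$ by hand while tracking the $2$-adic valuations (the parities of $A$, $X_0$, $Z$) carefully, before the high-exponent Catalan-type equation $b^d\pm 2Z^{d-1}=g^{d(d-1)}$ becomes available. A secondary nuisance is pushing the explicit threshold down to $d=8$ for the thin family $\alpha=-1/Z$, for which one needs the sharp form of the $abc$-conjecture (Theorem~\ref{ChimShoreySinha}) rather than the softer version behind Lemma~\ref{abclemma}, which only yields $d\ge 10$.
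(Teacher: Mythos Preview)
Your setup is exactly the paper's: you correctly reduce to the single relation $A^d-X_0^{\,d}=2AZ^{d-1}$ (with $A$ the numerator of the unique fixed point and $X_0$ that of the type-$1_2$ point), and your treatment of the integer case and of case~(i) ($\gcd(X_0,A)=1$, forcing $|A|=1$) is fine. The paper, however, avoids the $\gcd$ split entirely. Since the relation itself gives $A\mid X_0^{\,d}$, the paper simply divides through by $A$ to obtain
\[
A^{d-1}-\frac{X_0^{\,d}}{A}=2Z^{d-1},
\]
and then shows these three integers are pairwise coprime after a parity split on $A$ (if $A$ is even one divides by $2$ first). A single application of the $abc$-conjecture with $\epsilon=1$ yields $W^{d-7}<4K_1$ for $W=\max(|A|,|X_0|,|Z|)$, so there are no solutions once $d$ is large; under the explicit $abc$-conjecture ($K_1<1$) this already rules out $d\ge 9$, and $d=8$ is disposed of by checking the finitely many triples with $W\le 3$. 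This handles both of your cases~(i) and~(ii) uniformly, with no need to know $\gcd(X_0,A)$.

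Your case~(ii) is recoverable but, as you anticipated, is where the real work hides. The claim $A=\pm g^d$ (or $\pm g^d/2$) is in fact correct, but it is not immediate: one must factor $X_0^{\,d}=A\cdot(A^{d-1}-2Z^{d-1})$ with $\gcd$ dividing $2$, deduce $A=\pm a^d$ (odd case) or $\pm a^d/2$ (even case), and then argue via $|X_0|=|ac|$ and $\gcd(a,c)=1$ that $a=\gcd(X_0,A)=g$. The resulting ternary equation $\pm b^d\pm 2Z^{d-1}=g^{d(d-1)}$ is \emph{not} of the shape in Lemma~\ref{abclemma} (in particular your proposed triple $(x_0,-\alpha,\alpha)$ gives $X_3=3A$, so $\max(|X_1|,|X_2|,|X_3|)=3|A|\neq|X_2|$), so that lemma cannot be quoted; you would need to redo the $abc$ estimate from scratch, which is easy given the enormous exponent $d(d-1)$ on $g\ge 2$. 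All of this can be made to work, but the paper's divide-by-$A$ manoeuvre is a cleaner way to reach the coprime triple and sidesteps the whole $\gcd$ analysis.
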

\begin{proof}
The proof is in Section~\ref{prooflemmas}.
\end{proof}
We will use Lemmas \ref{abclemma}, \ref{fixedpoint} and \ref{no12} to prove Theorem~\ref{unifQ} that the $f_{d,c}$ has no $n$-rational periodic points for $n>1$ and sufficiently large degree $d$. Hence the absolute boundedness for the rational preperiodic points of $f_{d,c}$ holds for sufficiently large degree $d$ under the assumption of $abc$-conjecture.

\begin{proof}( Theorem~\ref{unifQ})
Let $n$ be an integer greater than 1. Let $x_i$'s be periodic points of exact period $n$ such that $f_{d,c}(x_i)=x_{i+1}$ and $f_{d,c}(x_n)=x_{n+1}=x_1$ for $i=1,2,\dots,n.$ By Theorem~\ref{odd}, we can assume that $d$ is even.

Without loss of generality, assume that $\displaystyle |x_2|=\max_{1\leq i\leq n}(|x_i|).$
By Corollary \ref{ratform} and Theorem~\ref{FermatCatalan}, we can write $x_i=\dfrac{X_i}{Z}$ such that  $\gcd(X_i,X_{i+1})=1=\gcd(X_i,Z)$ for $i=1,2,\dots,n.$
We have that $f_{d,c}(x_2)-f_{d,c}(x_1)=x_3-x_2$ or, equivalently, $x_2^d-x_1^d=x_3-x_2.$ Thus, $\dfrac{X_2^d-X_1^d}{X_3-X_2}=Z^{d-1}$, i.e.,
\begin{equation}\label{maineq}
    X_1^d-X_2^d=(X_3-X_2)Z^{d-1}.
\end{equation}
 If $\max(|X_2|,|Z|)=1,$ then Corollary \ref{ratform} implies that $c\in \mathbb{Z}$. Thus,  $f_{d,c}(x)=x^d+c\in\mathbb{Z}[x]$ and $X_1=x_1,X_2=x_2,X_3=x_3\in\{0,1,-1\}$. Since $f_{d,c}(x)\in\mathbb{Z}[x]$, we have $n\leq 2$ and $c\in\{0,-1,-2\}$. Otherwise, $f_{d,c}(a)$ converges to $\infty$ for every $a\in \mathbb{R}$. If $c=0$ or $-2$, we can only have fixed points $x\in \{0,1\}$ or $x=-1$, respectively. Thus, period $n=1,$ a contradiction. If $c=-1$ and $d$ even, then period $n=2$. This is also a trivial case. \\
For the case $\max(|X_2|,|Z|)>1,$ we assume the $abc$-conjecture. From Eq.~\ref{maineq}, and properties of $X_1,X_2,X_3$ and $ Z$, we have the following system:
\begin{align}\label{abcsystem}
\begin{cases}
X_2^d-X_1^d=(X_3-X_2)Z^{d-1}\neq 0\\
\gcd(X_1,X_2)=1\\
\max(|X_1|,|X_2|,|X_3|)=|X_2|\\
\max(|X_2|,|Z|)>1.\\
\end{cases}
\end{align}
The desired result follows directly from Lemma \ref{abclemma} for $k=d=l$ and $m=d-1$.
Therefore, if $d$ is sufficiently large and $c\neq -1$, then $f_{d,c}$ cannot have a rational point of period greater than 1. Therefore, $f_{d,c}$ can have at most 4 fixed points (with no preperiodic points) if $d$ is odd, and at most 2 fixed points if $d$ is even (including $\infty$). By Lemma \ref{no12}, $\#\pper(f_{d,c},\mathbb{P}^1(\mathbb{Q}))\leq 4$. For $c= -1$ and $d$ even, it is easy to see that $f_{d,-1}$ can only have points of exact period 2: $x_1=0,x_2=-1$ with the only one preperiodic point $x_0=1$, and $\infty$ as a fixed point.\\
For the rest of the proof, we assume explicit $abc$-conjecture.
By Lemma \ref{abclemma}, the result holds for $d\geq 10.$\\
Consider Eq.~\ref{abcsystem} for the case $d= 8$. \\
 Let $W=\max(|X_1|,|X_2|,|X_3|,|Z|)$. Theorem~\ref{ChimShoreySinha} implies that for $\epsilon=0.62991$, we can choose $K=10$ so that, by Lemma \ref{abclemma}, $$d<\log_b(2^{1+\epsilon}K_{\epsilon})+5+4\epsilon \textrm{ for } b\in (1,W].$$ If $W>1300$, then $$8<\log_{1300}(2^{1+0.62991}(10))+5+4(0.62991)<7.999,$$ a contradiction.
\\ If $W\leq 1300$ and period $n=2$, then we can exhaust all possible values of $X_1,X_2,X_3$ and $Z$. The only possible value of $c$ is $c=-1$  \footnote{The computation was done on MS Windows 10 running Julia 1.4.1 and AMD Ryzen 7 2700X 3.7 Ghz processor and 16 Gb RAM.}. For $W\leq 1300$ and period $n>2$, it was examined by Hutz \cite{MR3266961} in table 1 with the height bound $1000000$ that, for $3\leq d\leq 11$, $f_{d,c}(x)=x^d+c$ has no periodic points with exact period greater than 2.\\
For odd $d$, by Theorem~\ref{odd}, $f_{d,c}$ has at most 4 preperiodic (fixed) points.
\end{proof}
\section{Proof of Theorem~\ref{H81} by Andrew Bremner}\label{proofH81}

The following Theorem is used to prove Theorem~\ref{period6} that the rational periodic points of period 2 for $f_{6,c}(x)=x^6+c$ are trivial.  The similar hyperellipic curve, $y^2=x^5-7$, is studied in \cite{MR2455707} by Bremner as an example. Both $y^2=x^5-7$ and $y^2=x^5+81$ have Jacobians of rank 2. The MAGMA code for the computation in the proof is available at  \cite{bremner_magma}.

\begin{proof} ( Theorem~\ref{H81})
Let $K=\mathbb{Q}(\theta), \theta^5=3,$ with the real value $\theta_0\sim 1.24573$. Let $O_K$ be the ring of integers of $K$. The class number of $K$ is 1, and there are two units, normalized to have norm $+1,$ which we may take as
$$\epsilon_1=\theta^3+\theta^2-\theta-2,\,\,\,\,\epsilon_2=\theta^4+2\theta+4.$$
We have the ideal factorizations
$$(3)=(\theta)^5,\,\,\,\,\,(5)=(p_5)^5-(\theta^3+\theta^2-1)^5,\,\,\,\, \text{ where } \theta\equiv -2\mod p_5.$$
Let $\phi$ be the evaluation homomorphism $\theta\rightarrow \theta_0.$ Then
$$\phi(e_1)\sim 0.239297,\,\,\,\, \phi(e_2)\sim 8.89969.$$
We set $(u,v)=(x/z^2,y/z^5),$ with $(x,z)=1,$ and the equation becomes
$$y^2=x^5+81z^{10}=(x+\theta^4z^2)(x^4-\theta^4x^3z^2+\theta^8x^2z^4-\theta^{12}xz^6+\theta^{16}z^8).$$
A common (ideal) divisor of the two factors on the left divides $(x+\theta^4z^2)$ and $(5\theta^{16}),$ so can only contain factors $(\theta)$ and $(p_5)$. Suppose $p_5$ is a common divisor. Then $x+16z^2\equiv 0\mod 5,$ so $x+z^2=5M,$ say, with
$$y^2=x^5+81z^{10}=(-z^2+5M)^5+81z^{10}\equiv 5z^{10}\mod 25,$$
forcing $5\mid y,5\mid z, 5\mid x,$ a contradiction. Thus, any common divisor can only be a power of $(\theta).$ Further, when $(\theta)$ is a common divisor, then $\theta\mid x,$ so that $3\mid x,$ which implies that the greatest common divisor is precisely $(\theta)^4.$ In summary, the greatest common divisor of the two factors is $(g),$ where $g=1,\theta^4.$\\
We thus deduce element equations
\begin{align}
x+\theta^4z^2 &=gua^2,\\
x^4-\theta^4x^3z^2+\theta^8x^2z^4-\theta^{12}xz^6+\theta^{16}z^8 &=gu^{-1}b^2
\end{align}
for some unit $u,$ and elements $a,b\in O_K$ with $gab=y.$ Without loss of generality, $u=\pm \epsilon_1^{i_1}\epsilon_2^{i_2},$ where $i_1,i_2=0,1.$ The first equation above implies $\phi(g)\phi(u)\phi(a)^2=x+\theta_0^4z^2>0,$ so we must have $\phi(u)>0,$ and the negative sign is impossible.\\
It suffices to find all $x,z\in \mathbb{Z}$ satisfying $(2)$; and since $g=1,\theta^4$ is a perfect square, it suffices to consider solely the case $g=1.$ Let
$$C_u: X^4-\theta^4X^3+\theta^8X^2-\theta^{12}X+\theta^{16}=u^{-1}b^2,$$
for $u=1,\epsilon_1,\epsilon_2,\epsilon_1\epsilon_2.$\\
{\bf Case $u=1.$} The curve $C_1$ is birationally equivalent to the curve
$$E_1: y^2 = x^3 - 5 x^2 + 5 x.$$
The elliptic Chabauty technique developed by Bruin and implemented into Magma is able to treat this curve without problem (the auxiliary prime used being 7), and the result is that the only points $X=x/z^2$ on $C_1$ for $x,y\in\mathbb{Z},$ occur for $(x,z)=(1,0),(0,1),$ corresponding to the point at infinity, and $(u,v)=(0,9)$ on the original curve.\\
\textbf{Case $u=\epsilon_1.$} The curve $C_{\epsilon_1} $ contains the point
$$(-2,\theta^4-2\theta^3-3\theta^2-\theta+2)$$
allowing construction of a birational map to the curve
$$E_{\epsilon_1}: y^2=x^3+(-2\theta^3+3\theta^2-4\theta+4)x^2+(5\theta^4-8\theta^3+8\theta^2-4\theta-4)x.$$
Again, elliptic Chabauty techniques using Magma, with auxiliary prime 11, show that the only point $X=x/z^2$ on $C_{\epsilon_1}$ for $x,z\in \mathbb{Z},$ is given by $(x,z)=(-2,1),$ corresponding to $(u,v)=(-2,7)$ on the original curve.\\
\textbf{Case $u=\epsilon_2.$} The point on $C_{\epsilon_2}$
$$(-2\theta,-4\theta^4-4\theta^3-4\theta^2-6\theta-9)$$
allows construction of a birational map to the curve
$$E_{\epsilon_2}: y^2=x^3+(-\theta^4-\theta^2-\theta-2)x^2+(\theta^4+\theta^3+\theta^2+2\theta+2)x.$$
Elliptic Chabauty techniques using Magma, with auxiliary prime 11, show that there are no points $X/z^2$ on $C_{\epsilon_2}$ for $x,z\in\mathbb{Z}.$\\
\textbf{Case $u=\epsilon_1 \epsilon_2.$} The point on $C_{\epsilon_1\epsilon_2}$
$$(\frac{1}{4}(\theta^4+3\theta^3-3\theta^2+3\theta-3),\,\,\,\,\frac{1}{16}(-51\theta^4+39\theta^3+117\theta^2+63\theta-99))$$
leads to a birational map to the curve
$$E_{\epsilon_1\epsilon_2}: y^2=x^3+(-2\theta^3-2\theta^2+\theta+4)x^2+(-4\theta^3-3\theta^2+4\theta+8)x.$$
Elliptic Chabauty techniques in Magma, with auxiliary prime 11, show that the only points $X=x/z^2$ on $C_{\epsilon_1\epsilon_2}$ for $x,z\in \mathbb{Z},$ are given by $(x,z)=(-1,1),(3,1).$ The former is ``\emph spurious" in that equation $(1)$ is not satisfied for $(x,z)=(-1,1);$ and the latter corresponds to $(u,v)=(3,18)$ on the original curve.

\end{proof}
\section{Proofs of Lemmas \ref{fixedpoint} and \ref{no12}}\label{prooflemmas}

\begin{proof} ( Lemma \ref{fixedpoint})
Let $c=\dfrac{C}{Z^d}\in \mathbb{Q}.$ For $c\in \{-2,-1,0,1,2\}$, we can verify directly that only $f_{d,0}(x)=x^d$ has 2 rational fixed points $0$ and $1$ (with only one preperiodic point $-1$) for even $d\geq 4$, and 3 rational fixed points $-1,0,1$ for odd $d\geq 3$. For every other $c$'s, $f_{d,c}$ has at most 1 rational fixed point.\\
For $c\in\mathbb{Q}\backslash\{-2,1,0,1,2\}$, we have that $0$ is not a preperiodic point of $f_{d,c}(x)=x^d+c$. Otherwise, if $c\in \mathbb{Q}\backslash \mathbb{Z}$, then for some prime 
$p$, $f_{d,c}^n(0)$ diverges to infinity $p-$adically. For $c\in \mathbb{Z}\backslash \{-2,1,0,1,2\},$ $f_{d,c}^n(0)$ diverges to infinity.\\  If $c \in\mathbb{Z}\backslash \{-2,1,0,1,2\}$, then $f_{d,c}(x)$ has at most one fixed point. Otherwise, if we have 2 integral fixed points $X\neq Y$, then $X^d-X=c=Y^d-Y$, equivalently, $\dfrac{X^d-Y^d}{X-Y}=1$. Note that $\left| \dfrac{X^d-Y^d}{X-Y}\right|\geq 2$, for $\max(|X|,|Y|)\geq 2$ and $d\geq 3$. Since $\max(|X|,|Y|)\leq 1$, we have $X,Y\in\{-1,1\}$ and $d$ is odd. Therefore, $c=\pm 2$, a contradiction. \\ Consider the case $c\not\in \mathbb{Z}$, i.e., $|Z|>1.$
Assume that there are 2 fixed points $$x_1=\dfrac{X_1}{Z}\neq x_2=\dfrac{X_2}{Z},$$ where $\dfrac{X_1}{Z}$ and $\dfrac{X_2}{Z}$ are written in the lowest forms. Without loss of generality, assume $|X_1|\geq |X_2|$. Since $x_1$ and $x_2$ are fixed points, $$x_1-x_1^d=c=x_2-x_2^d.$$
Therefore, we have $$X_1^d-X_2^d=(X_1-X_2)Z^{d-1}\neq 0.$$ If $\gcd(X_1,X_2)\neq 1$, then $\gcd(X_1,Z)>1$, a contradiction.\\
Thus, $\gcd(X_1,X_2)=1$ and $\max(|X_1|,|X_2|,|Z|)>1$. By Lemma \ref{abclemma}, the equation has no integer solution for sufficiently large $d$. Moreover, if we assume the explicit $abc-$conjecture, then it is true for $d\geq 10.$\\
Consider for the case $8\leq d<10$.\\
For $d=8$, by Lemma \ref{diff_fixed}, there is $f_{8,c'}(x)=x^8+c'\in\mathbb{Q}[x]$ with 2-periodic points, say, $y_1=-x_1=\dfrac{-X_1}{Z}$ and $y_2=-x_2=\dfrac{-X_2}{Z}.$ Since $$f_{8,c'}(y_1)-y_2=-c'=f_{8,c'}(y_2)-y_1,$$ we have $X_1^8-X_2^8=(X_1-X_2)Z^7.$\\
Theorem~\ref{ChimShoreySinha} implies that for $\epsilon=0.62991$, we can choose $K_{\epsilon}=10$.\\ Let $W=\max(|X_1|,|X_2|,|Z|)$.  Applying the same argument in Lemma \ref{abclemma}, we have $$|W|^7<2^{1+\epsilon}K_{\epsilon}W^{4+4\epsilon}$$ or equivalently, $$|W|^{3-4\epsilon}<2^{1+\epsilon}K_{\epsilon}.$$ If $W>1300$, then
$$31<1300^{0.48036}<W^{0.48036}=W^{3-4(0.62991)}<2^{1.62991}(10)<30.95<31,$$ a contradiction.\\ If $W\leq 1300$, then we can exhaust all possible values of $X_1,X_2$, and $Z$. None of the values can be points of period 2, except $c'=-1$. By Lemma \ref{diff_fixed}, this yields the value of $c=0,$ a contradiction. \\
\end{proof}

\begin{proof}( Lemma \ref{no12})
Assume to the contrary that there is $r\in \mathbb{Q}$ such that $r$ is a $1_2$-type point of $f_{d,c}$. Thus, $f_{d,c}(r)=-x_0$ for a fixed point $x_0\in \mathbb{Q}$. Since $c=x_0-x_0^d,$ we have $$f_{d,c}(r)=r^d+x_0-x_0^d=-x_0.$$ Thus, $r^d+2x_0-x_0^d=0.$ By Corollary~\ref{ratform}, write $r=\dfrac{R}{Z}$ and $x_0=\dfrac{X_0}{Z}$ in the lowest forms. Thus, $\gcd(X_0,Z)=1=\gcd(R,Z).$ Then
\begin{align}
    \dfrac{R^d}{Z^d}+\dfrac{2X_0}{Z}-\dfrac{X_0^d}{Z^d} &=0 \nonumber\\
    X_0^d-R^d&=2X_0Z^{d-1} \nonumber\\
    X_0^{d-1}-\dfrac{R^d}{X_0}&=2Z^{d-1}.\label{abc_prefix}
\end{align}
Since $X_0$ and $Z$ are integers, $X_0\mid R^d$. We consider 2 cases.\\
\textbf{Case 1.} $X_0$ is odd.\\
Then $R$ is odd (otherwise, $2Z^{d-1}$ is odd).\\
Since $$\gcd(R,2Z)=1=\gcd(X_0,2Z)=\gcd(X_0^{d-1},2Z^{d-1}),$$ we have $\gcd(X_0^{d-1},\dfrac{R^d}{X_0})=1.$ Thus, $ X_0^{d-1},\dfrac{R^d}{X_0}$ and $ 2Z^{d-1}$ are coprime. Assume the $abc$-conjecture with $\epsilon =1$. Then there is a constant $K_1>0 $ such that\\
\begin{align*}
 \max(|X_0^{d-1}|,|\dfrac{R^d}{X_0}|, |2Z^{d-1}|)&< K_1\rad(|X_0^{d-1}\cdot \dfrac{R^d}{X_0}\cdot 2Z^{d-1}|)^2\\
 &\leq K_1\rad(|2X_0RZ|)^2.
\end{align*}
If $|X_0|=\max(|X_0|,|R|,|Z|)>1$, then \\
\begin{align*}
    |X_0^{d-1}|&\leq  \max(|X_0^{d-1}|,|\dfrac{R^d}{X_0}|, |2Z^{d-1}|)\\
       &<K_1\rad(|2X_0RZ|)^2\\
       &\leq K_1\rad(|2X_0^3|)^2.
\end{align*}
\begin{equation*}
        \text{Thus,}\, |X_0^{d-1}|< K_14X_0^6.
\end{equation*}
Therefore, $|X_0^{d-7}|< 4K_1.$
Thus, for sufficiently large $d$, there are no integers solution to Eq.~\ref{abc_prefix}. Similarly, we can yield the same result when $|R|$ and $|Z|$ are the maximum values. Moreover, if we assume the explicit $abc$-conjecture with $\epsilon=1$ and $K_1<1$, the result holds when $d\geq 9.$ For $d=8,$ we check all values $|X_0|,|R|,|Z|\leq 3$. None of the values, except $X_0=R=0$, yields the solution to the equation.\\
\textbf{Case 2.} $X_0$ is even.\\
Since $X_0\mid R^d$, $R$ is even. Consider $$\dfrac{X_0^{d-1}}{2}-\dfrac{R^d}{2X_0}=Z^{d-1}.$$ Since $\dfrac{X_0^{d-1}}{2}$ is an integer, so is $\dfrac{R^d}{2X_0}.$ Since $\gcd(X_0,Z)=1$, $\gcd(\dfrac{X_0^{d-1}}{2},\dfrac{R^d}{2X_0})=1.$ From the previous argument in Case 1 for the tuple $(\dfrac{X_0^{d-1}}{2},\dfrac{R^d}{2X_0},Z^{d-1})$, we can yield the same result in this case.\\
\end{proof}

%

{\bf Acknowledgments}\\
I am deeply grateful to Professor Marc Hindry, Professor Loïc Merel, Professor Michel Waldschmidt,  and Professor Lawrence C. Washington for guiding me through this work. I am immensely thankful to Professor Andrew Bremner for the proof of Theorem~\ref{H81}. I would like to thank Professor Benjamin Hutz, Professor Nicole Looper, and Professor W\l adys\l aw Narkiewicz for helpful discussions. I appreciate Professor Pierre Charollois, Professor Alain Kraus, Professor Shanta Laishram,  and Professor Aram Tangboonduangjit for the helps. I gratefully thank the referee for  giving such constructive comments. I am greatly thankful to Dr. Patcharee Panraksa for the inspiration. The majority of the work was completed at the Institut de Mathématiques de Jussieu-Paris Rive Gauche with leave from the Mahidol University International College from Research Grant No. 06/2018. I wish to express my sincere gratitude to these institutions for their facilities and supports. 


\end{document}